\newcounter{thmctr}
\newtheorem{thm}[thmctr]{Theorem}
\newtheorem{lemma}[thmctr]{Lemma}
\newtheorem{cor}[thmctr]{Corollary}
\newtheorem{conj}[thmctr]{Conjecture}
\newtheorem{problem}[thmctr]{Problem}
\theoremstyle{definition}
\newtheorem*{defi}{Definition}
\theoremstyle{plain}
\DeclareMathOperator*{\diam}{\text{diam}}
\newcommand{\jozifoot}{jobal@math.uiuc.edu; Department of Mathematics, University of Illinois, 1409
W. Green Street, Urbana, IL 61801; and Mathematical Institute, University of Szeged, Szeged,
Hungary; Research supported in part by:  Marie Curie Fellowship IIF-327763, NSF CAREER Grant
DMS-0745185, UIUC Campus Research Board Grants 11067 and 13039 (Arnold O.\ Beckman Research Award),
and OTKA Grant K76099.}
\newcommand{\johnfoot}{lenz@math.uic.edu; Department of Mathematics, Statisticts, and Computer
Science, University of Illinois at Chicago, 851 S. Morgan Street, Chicago, IL; Research partly
supported by NSA Grant H98230-13-1-0224.}
\title{Hypergraphs with Zero Chromatic Threshold}
\author{J\'ozsef Balogh\footnote{\jozifoot} \and John Lenz\footnote{\johnfoot}}
\begin{document}
\maketitle
\begin{abstract}
  Let $F$ be an $r$-uniform hypergraph.  The \emph{chromatic threshold} of the family of $F$-free,
  $r$-uniform hypergraphs is the infimum of all non-negative reals $c$ such that the subfamily of
  $F$-free, $r$-uniform hypergraphs $H$ with minimum degree at least $c \binom{|V(H)|}{r-1}$ has
  bounded chromatic number.  The study of chromatic thresholds of various graphs has a long history,
  beginning with the early work of Erd\H{o}s-Simonovits.  One interesting question, first proposed
  by \L{}uczak-Thomass\'{e} and then solved by Allen-B\"{o}ttcher-Griffiths-Kohayakawa-Morris, is
  the characterization of graphs having zero chromatic threshold, in particular the fact that there
  are graphs with non-zero Tur\'{a}n density that have zero chromatic threshold.  In this paper, we
  make progress on this problem for $r$-uniform hypergraphs, showing that a large class of
  hypergraphs have zero chromatic threshold in addition to exhibiting a family of constructions
  showing another large class of hypergraphs have non-zero chromatic threshold.  Our construction is
  based on a special product of the Bollob\'as-Erd\H{o}s graph defined earlier by the authors.
\end{abstract}

\section{Introduction} 
\label{sec:Introduction}

In 1973, Erd\H{o}s and Simonovits~\cite{cr-erdos73} asked the following question: ``If $G$ is
non-bipartite, what bound on $\delta(G)$ forces $G$ to contain a triangle?''  This question was
answered by Andr\'{a}sfai, Erd\H{o}s, and S\'{o}s~\cite{cr-andrasfai74}, where they showed that if
$G$ is a triangle-free, $n$-vertex graph with $\delta(G) > \frac{2n}{5}$, then $G$ is bipartite.
The blowup of $C_5$ shows that this is sharp.  Erd\H{o}s and Simonovits~\cite{cr-erdos73}
generalized this problem to the following conjecture: if $\delta(G) > (1/3 + \epsilon) \left| V(G)
\right|$ and $G$ is triangle-free, then $\chi(G) < k_{\epsilon}$, where $k_{\epsilon}$ is a constant
depending only on $\epsilon$.  The conjecture was proven by Thomassen~\cite{cr-thomassen02}, but
interest remained in generalizing the problem to other graphs and to hypergraphs.

\begin{defi}
  An \emph{$r$-uniform hypergraph} $H$ is a pair $(V(H),E(H))$ where $V(H)$ is any finite set and
  $E(H)$ is a family of $r$-subsets of $V(H)$.  A set of vertices $X \subseteq V(H)$ is an
  \emph{independent set} if every hyperedge (element of $E(H)$) contains at least one vertex outside
  $X$ and $X$ is a \emph{strong independent set} if every hyperedge intersects $X$ in at most one
  vertex.  The \emph{degree} of a vertex $x \in V(H)$, denoted $d(x)$, is the number of hyperedges
  containing $x$.  The \emph{minimum degree of $H$}, denoted $\delta(H)$, is the minimum degree of a
  vertex of $H$.  A hypergraph $H$ is \emph{$k$-colorable} if there exists a partition of the vertex
  set of $H$ into $k$ sets $V_1,\dots,V_k$ such that each $V_i$ is an independent set. The
  \emph{chromatic number of $H$}, denoted $\chi(H)$, is the minimum $k$ such that $H$ is
  $k$-colorable.  If $F$ and $H$ are $r$-uniform hypergraphs, then $H$ is \emph{$F$-free} if $F$
  does not appear as a subhypergraph of $H$, i.e.\ there does not exist an injection $\alpha : V(F)
  \rightarrow V(H)$ such that if $\{f_1,\dots,f_r\}$ is a hyperedge of $F$ then
  $\{\alpha(f_1),\dots,\alpha(f_r)\}$ is a hyperedge of $H$. If $X \subseteq V(H)$, the
  \emph{induced hypergraph on $X$}, denoted $H[X]$, is the hypergraph with vertex set $X$ and
  its hyperedges are all hyperedges of $H$ which are completely contained inside $X$.
\end{defi}

\begin{defi}
  Let $\mathcal{F}$ be a family of $r$-uniform hypergraphs.  The \emph{chromatic threshold} of
  $\mathcal{F}$ is the infimum of $c \geq 0$ such that the subfamily of $\mathcal{F}$
  consisting of hypergraphs $H$ with minimum degree at least $c \binom{\left| V(H) \right|}{r-1}$
  has bounded chromatic number.
\end{defi}

Note that Erd\H{o}s and Simonovits'~\cite{cr-erdos73} conjecture is that the chromatic threshold of
the family of triangle-free graphs is $1/3$.  The chromatic threshold of the family of $F$-free
graphs for various $F$ have been studied by several researchers~\cite{cr-allen11, cr-brandt11,
cr-goddard10, cr-luczak06, cr-luczak10, cr-thomassen02, cr-thomassen07}, eventually culminating in a
theorem of \L{}uczak and Thomass\'{e}~\cite{cr-luczak10} and a theorem of Allen, B\"{o}ttcher,
Griffiths, Kohayakawa, and Morris~\cite{cr-allen11} where they determined the chromatic threshold of
the family of $F$-free graphs for all $F$.  An interesting consequence of \cite{cr-allen11} is the
solution to a conjecture of \L{}uczak and Thomass\'{e}~\cite{cr-luczak10}: the family of $F$-free
graphs has chromatic threshold zero if and only if $F$ is near acyclic.  (A graph
$G$ is \emph{near acyclic} if there exists an independent set $S$ in $G$ such that $G-S$ is a forest
and every odd cycle has at least two vertices in $S$.)  This is surprising because the Tur\'{a}n
density is a trivial upper bound on the chromatic threshold, but the family of graphs with zero
Tur\'{a}n density (bipartite graphs) differs from the family of near-acyclic graphs.

Balogh, Butterfield, Hu, Lenz, and Mubayi~\cite{cr-balogh11} initiated the study of chromatic
thresholds of $r$-uniform hypergraphs and among other things proposed the following problem, again
interested in comparing zero Tur\'{a}n density with zero chromatic threshold.

\begin{problem} \label{prob:chromzero}
  Characterize the $r$-uniform hypergraphs $F$ for which the chromatic threshold of the family of
  $F$-free hypergraphs has chromatic threshold zero.
\end{problem}

Balogh, Butterfield, Hu, Lenz, and Mubayi~\cite{cr-balogh11} made partial progress on this problem
by proving that for a large class of hypergraphs $F$, the family of $F$-free hypergraphs has
chromatic threshold zero.  In the other direction, \cite{cr-balogh11} also contains constructions of
families of $F$-free hypergraphs with non-zero chromatic threshold for various hypergraphs $F$.  Our
main result in this paper is to extend both of these results, enlarging the class of $F$s for which
we can prove the family of $F$-free hypergraphs has chromatic threshold zero in addition to giving a
more general construction of families with non-zero chromatic threshold.  One of our key ideas is to
use a hypergraph extension of the Bollob\'{a}s-Erd\H{o}s graph~\cite{beg-bollobas76} defined by the
authors in~\cite{rt-balogh11-2,rt-balogh11} instead of the Borsuk-Ulam graph in a Hajnal type
construction used by \L{}uczak and Thomass\'{e}~\cite{cr-luczak10}. To state our results, we need
some definitions.

\begin{defi}
  A \emph{cycle of length $t \geq 2$} in a hypergraph is a collection of $t$ distinct vertices $X =
  \left\{ x_1,\ldots,x_t \right\}$ of $H$ and $t$ distinct edges $E_1,\ldots,E_t$ such that $\left\{
  x_i,x_{i+1} \right\} \in E_i$ for each $i = 1,\ldots, t$ (indices taken mod $t$.)  A hypergraph is
  a \emph{hyperforest} if it contains no cycles.  A hypergraph is a \emph{hypertree} if it is a
  connected hyperforest, where \emph{connected} means for every two vertices $x,y$, there is a
  sequence of edges $E_1,\ldots,E_{\ell}$ for some $\ell$ such that $x \in E_1$, $y \in E_{\ell}$
  and $E_i \cap E_{i+1} \neq \emptyset$ for all $i = 1,2,\ldots,\ell-1$.  A \emph{component} of $H$
  is a maximal connected subhypergraph of $H$.  A hypergraph is \emph{linear} if every pair of
  hyperedges intersect in at most one vertex.
\end{defi}  

The key definitions are the following.

\begin{defi}
  An $r$-uniform hypergraph $H$ is \emph{unifoliate $r$-partite} if there exists a partition of the
  vertices into $r$ classes $V_1, V_2, \ldots, V_{r}$ such that $H[V_1]$ is a linear hyperforest,
  every edge not in $H[V_1]$ has exactly one vertex in each $V_i$, and every cycle in $H$ uses
  either vertices from at least two components of $H[V_1]$ or it contains zero or at least two edges
  of $H[V_1]$.

  An $r$-uniform hypergraph $H$ is \emph{strong unifoliate $r$-partite} if it is \emph{unifoliate
  $r$-partite} and in addition, in a witnessing partition there does not exist two vertices $x,y \in
  V_1$ such that $x$ and $y$ are in the same component of $H[V_1]$ and $H$ contains a sequence of
  hyperedges $E_1,\dots,E_\ell$ for some $\ell$ with $x \in E_1$, $y \in E_\ell$, and $E_i \cap
  E_{i+1} \cap (V_2 \cup \dots \cup V_r) \neq \emptyset$ for all $1 \leq i \leq \ell - 1$.  Note that
  this sequence of hyperedges is like a path between $x$ and $y$ which is required to ``connect''
  using vertices outside $V_1$.
\end{defi}

Strong and normal unifoliate $r$-partite are similar but not quite identical requirements; they
differ only in the type of cycles that are allowed.  To be strong unifoliate $r$-partite, a
hypergraph is forbidden to have cycles which use arbitrary number of edges inside some hypertree of
$H[V_1]$ combined with cross-edges which connect using vertices outside $V_1$.  To be unifoliate
$r$-partite, a hypergraph is forbidden to have cycles using exactly one edge $E$ from $H[V_1]$
together with cross-edges which connect using vertices either outside $V_1$ or vertices that are in
the same hypertree as the edge $E$.  Note that the forbidden cycle condition for strong unifoliate
$r$-partite is a stronger forbidden cycle condition, since if a cycle uses one edge $E$ from
$H[V_1]$ and some cross-edges which connected using vertices inside the same hypertree as $E$, the
cycle could be rerouted through the hypertree containing $E$ decreasing the number of cross-edges
which connect using a vertex of $V_1$.  Our main theorem is the following.

\begin{thm} \label{mainthm}
  Fix $r \geq 3$. If $F$ is an $r$-uniform, strong unifoliate $r$-partite hypergraph then the family of
  $F$-free hypergraphs has chromatic threshold zero. If $F$ is not unifoliate $r$-partite then the
  family of $F$-free hypergraphs has chromatic threshold at least $(r-1)! r^{-2r+2}$.
\end{thm}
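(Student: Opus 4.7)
The theorem splits into a positive half --- strong unifoliate $r$-partite $F$ yields chromatic threshold zero --- and a negative half --- non-unifoliate $r$-partite $F$ yields chromatic threshold at least $(r-1)! r^{-2r+2}$ --- and I would attack the two halves with quite different tools.

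For the positive half, the plan is to lift the \L{}uczak--Thomass\'e / Allen--B\"ottcher--Griffiths--Kohayakawa--Morris strategy to $r$-uniform hypergraphs. Fix $\epsilon > 0$ and an $F$-free hypergraph $H$ on $n$ vertices with $\delta(H) \geq \epsilon \binom{n}{r-1}$; after applying a hypergraph regularity lemma and passing to a dense reduced structure, one shows that $\chi(H) > k(\epsilon,F)$ would force an embedding of $F$. The strong unifoliate hypothesis is deployed as follows: fix a witnessing partition $V(F) = V_1 \cup \dots \cup V_r$, embed the linear hyperforest $F[V_1]$ first into a single high-codegree neighborhood, and then greedily attach each cross-edge (one vertex per class) through common neighborhoods. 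The strong cycle restriction is exactly the combinatorial statement that this greedy attachment never requires two mutually incompatible codegree conditions; any cycle of the forbidden type would be precisely such an obstruction. The main technical hurdle is propagating codegree bounds through the hypertrees of $F[V_1]$ without ruinous density loss, and managing the image-disjointness of vertices across classes.

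For the negative half, the plan is to produce, for each $\epsilon < (r-1)! r^{-2r+2}$, arbitrarily large $F$-free hypergraphs of unbounded chromatic number with $\delta \geq \epsilon \binom{n}{r-1}$. I would use a Hajnal-style iterated substitution built on the hypergraph Bollob\'as--Erd\H{o}s object introduced by the authors in \cite{rt-balogh11-2, rt-balogh11}, replacing the Borsuk--Ulam graph used by \L{}uczak and Thomass\'e in the graph case. That object sits on a power of the sphere, its density computation yields exactly the constant $(r-1)! r^{-2r+2}$, and iterated substitution drives the chromatic number to infinity while preserving minimum-degree density. $F$-freeness is then verified by contradiction: a copy of $F$ inside the construction would, by the product structure, pick out a class $V_1$ landing inside a single Bollob\'as--Erd\H{o}s factor and the remaining classes $V_2,\dots,V_r$ distributed across the other factors; the geometric constraints of the factor force $F[V_1]$ to be a linear hyperforest, the cross-edge class structure is automatic, and the antipodal/spherical constraints rule out precisely the cycles forbidden by the unifoliate definition, exhibiting $F$ as unifoliate $r$-partite --- a contradiction.

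I expect the hardest step to be, in both halves, keeping the cycle combinatorics of $F$ in exact alignment with the process at hand: propagating codegree bounds through $H[V_1]$ in the embedding argument, and pinning down which cycles can survive in the product construction. The (strong) unifoliate $r$-partite definition has evidently been tuned so that the \emph{same} cycle condition simultaneously enables the greedy embedding in the positive direction and forces $F$ to be unifoliate whenever it embeds into the construction in the negative direction, and most of the work is verifying that this fit is clean.
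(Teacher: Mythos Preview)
Your plan diverges from the paper on both halves, and in each case the divergence is more than cosmetic.

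\textbf{Positive half.} The paper does not use hypergraph regularity at all. It invokes the fiber-bundle machinery of \cite{cr-balogh11} directly: form the $G[V_1]$-bundle $(B,\gamma,F)$ of $H$, where an edge of $B$ is a $|V_1|$-set spanning a copy of $G[V_1]$ and $\gamma(b)$ is the link of $b$. The argument is then a clean dichotomy on $\dim_K(B,\gamma,F)$ for $K$ a large complete $(r-1)$-partite, $(r-1)$-uniform hypergraph: either the dimension is below the number $t$ of components of $G[V_1]$, in which case Theorem~\ref{thm:bundledim} bounds $\chi(B)$ and a degeneracy argument (Lemma~\ref{lem:degen}) converts this to a bound on $\chi(H)$; or the dimension is at least $t$, in which case one embeds each hypertree $T_i$ of $G[V_1]$ into a witnessing edge $E_i$ of $B$ and reads the cross-edges of $G$ off the sections. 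The strong unifoliate hypothesis enters exactly once, to guarantee that each component $C_j$ of the shadow hypergraph $G'$ on $V_2\cup\dots\cup V_r$ touches each $T_i$ in at most one vertex, so that $C_j$ can be embedded into a single section. Your regularity-based sketch may be workable in principle, but ``embed $F[V_1]$ into a single high-codegree neighborhood'' is not right as stated (it is an $r$-uniform hyperforest, not a link object), and you have not isolated the step where strong unifoliate is actually used.

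\textbf{Negative half.} The paper does not use iterated substitution. The construction $G(F,k,\epsilon,n)$ is a one-shot object: a fixed-size set $A$ carrying a copy of the sphere-based hypergraph $H_0$ (which already has $\chi\geq k$ by Corollary~\ref{cor:Slargechrom}), together with auxiliary parts $C_1,\dots,C_{r-1},D$ of size $n/r$ that supply the minimum degree via complete $r$-partite edges on $C_1,\dots,C_{r-1},D$ and sphere-cap edges from $A$ to the $C_i$. The constant $(r-1)! r^{-2r+2}$ falls out of the degree count for vertices in $A$: each loses a $2^{-\ell}$ factor per $C_i$ with $\ell=\lceil\log_2 r\rceil$. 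Iterated substitution is neither needed for the chromatic number (that is already in $H_0$) nor obviously compatible with this specific constant. The $F$-freeness argument is where the real geometry lives: Lemmas~\ref{lem:nearorfarpoints}--\ref{lem:closeorfar} track, coordinate by coordinate, that vertices in the same $H_0$-component have all sphere coordinates either near or nearly antipodal, and the cap condition on cross-edges then forces consecutive $A$-vertices along a putative bad cycle to be near rather than far --- contradicting the existence of a genuine $H_0$-edge on the cycle. You have the right intuition (``antipodal/spherical constraints rule out the forbidden cycles''), but the execution is this near/far propagation, not any product structure inherited from substitution.
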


The constant $(r-1)!r^{-2r+2}$ could be slightly improved (see the comments in the proof of
Lemma~\ref{lem:constrmindeg}).  Currently, proving sharpness seems out of reach so we make no effort
to optimize the constant.  Theorem~\ref{mainthm} generalizes results of Balogh, Butterfield, Hu,
Lenz, and Mubayi~\cite{cr-balogh11}, since the classes of hypergraphs studied in \cite{cr-balogh11}
are subclasses of either non-unifoliate or strong unifoliate $r$-partite hypergraphs.  For $r \geq
3$, there exist hypergraphs which are unifoliate $r$-partite but not strong unifoliate $r$-partite
so Theorem~\ref{mainthm} does not completely solve Problem~\ref{prob:chromzero}.  An example for $r
= 3$ is the hypergraph with vertex set $\{a_1,a_2,a_3,a_4,b_1,b_2,c_1,c_2\}$ and hyperedges
$a_1a_2a_3$, $a_1b_1c_1$, $a_2b_2c_2$, $a_4b_1c_2$, $a_4b_2c_1$.  An illustrative vertex partition
has $a_1,a_2,a_3,a_4$ in one part, $b_1, b_2$ in the second part, and $c_1,c_2$ in the third part.

\begin{conj} \label{conj:unifoliate}
  For $r \geq 3$ and an $r$-uniform hypergraph $F$, the family of $F$-free hypergraphs has chromatic
  threshold zero if and only if $F$ is unifoliate $r$-partite.
\end{conj}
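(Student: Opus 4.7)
Proof plan. Theorem~\ref{mainthm} already establishes the ``only if'' direction of Conjecture~\ref{conj:unifoliate}: any non-unifoliate $F$ has chromatic threshold at least $(r-1)!r^{-2r+2}>0$. The remaining task is to close the gap between unifoliate and strong unifoliate on the ``if'' side, i.e., show that an $F$ which is unifoliate $r$-partite but not strong unifoliate $r$-partite still has chromatic threshold zero.

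My plan is to build directly on the (presumed) proof architecture of the strong unifoliate upper bound in Theorem~\ref{mainthm}. Given an $F$-free hypergraph $H$ with $\delta(H)\ge c n^{r-1}$, one applies a strong hypergraph regularity lemma, produces a reduced cluster hypergraph $R$, and argues that $H$ is essentially a blow-up of a small $F$-free pattern; since the chromatic number of a blow-up is bounded by that of the pattern, $\chi(H)$ is bounded by a function of $c$ alone. All the difficulty lies in an embedding dichotomy: either $H$ fits one of the allowed blow-up configurations, or $F$ embeds into $H$. In the strong unifoliate case the embedding is performed greedily---one first embeds the linear hyperforest $H[V_1]$ using the cluster structure of $R$, then extends each cross-edge vertex-by-vertex, with the forbidden cross-edge-path condition ensuring extensions never conflict.

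To drop the strong hypothesis I would change the order and granularity of the embedding. Concretely, for each hypertree $T$ of $F[V_1]$ I would embed $V(T)$ \emph{jointly} with all cross-edges of $F$ whose $V_1$-vertex lies in $V(T)$, rather than sequentially. Such a joint embedding should exist via a Hall/SDR or dependent-random-choice argument applied inside the reduced hypergraph: the minimum degree hypothesis, together with regularity, guarantees large common codegrees, so each hypertree's cross-edge bundle can be realized provided no combinatorial obstruction forces two would-be images to coincide. The unifoliate cycle condition---which forbids cycles that use exactly one edge of $F[V_1]$ combined with cross-edges all sitting in a single component of $F[V_1]$---is precisely the hypothesis needed to show that two such collisions cannot occur without producing a cycle pattern ruled out in $F$ itself. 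Individual hypertrees can then be handled independently because unifoliateness also prevents bridging cycles across distinct components.

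The technical heart of the argument, and the step I expect to be the main obstacle, is this joint embedding lemma. Under the strong unifoliate hypothesis the lemma is essentially trivial (greedy extension works); consequently, the entire difficulty of Conjecture~\ref{conj:unifoliate} is concentrated in proving the joint version under only the weaker unifoliate cycle condition. I anticipate this will require a codegree/common-neighborhood analysis keyed to the precise hyperforest structure of each $F[V_1]$-component, possibly combined with an iterative Hajnal-type refinement of the regular partition to eliminate local conflicts, and I suspect it is the desire for this additional machinery that led the authors to leave the conjecture open rather than prove it alongside Theorem~\ref{mainthm}.
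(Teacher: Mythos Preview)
The statement you are trying to prove is labeled a \emph{Conjecture} in the paper, and the paper does \emph{not} contain a proof of it. The authors explicitly leave it open: Theorem~\ref{mainthm} handles the strong unifoliate case for the upper bound and the non-unifoliate case for the lower bound, and the gap you identify is precisely the content of the conjecture. So there is no ``paper's own proof'' to compare your proposal against.

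Beyond that, your proposal rests on a mistaken picture of how the paper proves the strong unifoliate upper bound. You assume the argument goes through a strong hypergraph regularity lemma, a reduced cluster hypergraph, and a blow-up/embedding dichotomy. It does not. The actual machinery in Section~\ref{sec:upperbound} is the \emph{fiber bundle} framework of~\cite{cr-balogh11}: one takes the $G[V_1]$-bundle of $H$, invokes Theorem~\ref{thm:bundledim} to bound $\chi(B)$ when the bundle dimension is small, and otherwise uses large dimension to embed $G$ directly by placing the components $C_i$ of the cross-edge hypergraph $G'$ into sections of a complete $(r-1)$-partite $K$. The place where ``strong'' is used is the very last paragraph: it guarantees that for each component $C_i$ of $G'$ and each hypertree $T_j$, at most one vertex of $T_j$ lies in $N_{V_1}(C_i)$, so a single section suffices to host all of $C_i$. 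Under mere unifoliateness this fails, and the embedding cannot be read off from one section per component.

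Your ``joint embedding lemma'' is therefore aimed at the right obstruction, but it is not situated in the correct framework, and in any case you do not prove it---you explicitly flag it as the main obstacle and speculate about Hall-type or dependent-random-choice arguments without carrying any of them out. As written this is a research plan, not a proof, and the conjecture remains open.
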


The definition of unifoliate $2$-partite is not quite equivalent to the definition of a near-acyclic
graph which is why Theorem~\ref{mainthm} above is stated only for $r \geq 3$, but with a little
extra work the proofs below extend to $r = 2$ with unifoliate $2$-partite replaced by near-acylic.
But since different behavior occurs for $r \geq 3$ compared to $r = 2$ (as evinced by the difference
between unifoliate $2$-partite and near-acyclic), we simplify the presentation by focusing only on
$r \geq 3$.  The remainder of this paper is organized as follows. In Section~\ref{sec:lower}, using
a construction built from the high-dimensional unit sphere, we prove that for every non unifoliate
$r$-partite hypergraph $F$, the family of $F$-free hypergraphs has non-zero chromatic threshold.  In
Section~\ref{sec:upperbound}, we show how the tools from \cite{cr-balogh11} can be applied to prove
that if $F$ is a strong unifoliate $r$-partite hypergraph, then the family of $F$-free hypergraphs
has chromatic theshold zero.

\section{Construction for positive chromatic threshold} 
\label{sec:lower}

To prove a lower bound on the chromatic threshold of the family of $F$-free hypergraphs, we need to
construct an infinite sequence of $F$-free hypergraphs with large chromatic number and large minimum
degree.  First, we need a construction of Balogh and Lenz~\cite{rt-balogh11-2,rt-balogh11} of a
hypergraph with large chromatic number built from the high dimensional unit sphere.  The
construction is based on the celebrated Bollob\'{a}s-Erd\H{o}s Graph~\cite{beg-bollobas76}.  We only
sketch its definition here, for details see~\cite{rt-balogh11-2,rt-balogh11}.  Throughout this
section, an integer $r$ is fixed and all hypergraphs considered are $r$-uniform.

\begin{defi} \cite[Section 2 with $s = 2$]{rt-balogh11-2}
  Given integers $n$, $L$, and $d$ and a real number $\theta > 0$, we construct the $r$-uniform
  hypergraph $H(n,L,d,\theta)$ as follows.  Let $P$ be $n$ ``evenly distributed'' points on the
  $d$-dimensional unit sphere $\mathbb{S}^d$, let $\ell = \left\lceil \log_2 r \right\rceil$, and
  let $B_1, \dots, B_\ell$ be complete bipartite graphs on the vertex set $[r]$ defined as follows.
  Assign bit strings of length $ \left\lceil \log_2 r \right\rceil$ to the elements of $[r]$ and
  define $B_i$ as the complete bipartite graph consisting of the edges between vertices differing in
  coordinate $i$.  Note that the union $\cup B_i$ covers the complete graph $K_{[r]}$.  If $\bar{x}
  \in P^{\ell}$, for $1 \leq i \leq \ell$ denote by $x_i$ the point of the sphere appearing in the
  $i$-th coordinate of tuple $\bar{x}$.  Let $\left\lVert \cdot \right\rVert$ be the Euclidean norm
  in $\mathbb{R}^{d+1}$.  Now we define an $r$-uniform hypergraph $H'$ as follows.
  \begin{itemize}
    \item $V(H') = P^\ell$,

    \item $E \in \binom{V(H')}{r}$ is a hyperedge if there exists some ordering $\bar{x}^1, \dots,
    \bar{x}^r$ of the elements of $E$ such that for all $1 \leq i \leq \ell$ and all $ab \in
    E(B_i)$, then $\left\lVert x^a_{i} - x^b_{i} \right\rVert > 2 - \theta$.
  \end{itemize}
  Now form the hypergraph $H$ by applying \cite[Theorem 16]{rt-balogh11} to $H'$; this operation
  consists of blowing up every vertex in $H'$ into a strong independent set, randomly sparsening the
  hypergraph, and finally deleting one edge from each cycle on at most $L$ vertices.
\end{defi}

\begin{thm} \label{thm:Ssmallind} (\cite[Lemma 13]{rt-balogh11-2})
Given any real $\delta > 0$ and any integer $L \geq 3$, it is possible to select $d$, $\theta_0 >
0$, and $n_0$ so that $\alpha(H(n,L,d,\theta)) \leq \delta |V(H(n,L,d,\theta))|$ for all $0 < \theta \leq
\theta_0$ and $n \geq n_0$.  Also, every $L$ vertices in $H(n,L,d,\theta)$ induce a linear
hyperforest.
\end{thm}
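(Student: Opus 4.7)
The plan is to prove the two properties separately, first establishing that the underlying ``Bollob\'as--Erd\H{o}s style'' hypergraph $H'$ on $P^\ell$ already has sub-linear independence number, and then showing that applying \cite[Theorem 16]{rt-balogh11} preserves this while simultaneously producing the $L$-local linear forest structure.

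For the independence number of $H'$, I would mimic the original Bollob\'as--Erd\H{o}s argument but lifted to the product $P^\ell$. The starting brick is a spherical measure-concentration statement: for every $\eta > 0$, if $d$ is sufficiently large and $\theta$ is sufficiently small, then any measurable set $A \subseteq \mathbb{S}^d$ with normalized measure at least $\eta$ contains a pair of points $x,y$ with $\|x - y\| > 2 - \theta$ (in fact a large subset of $A$ has its near-antipode covered by $A$ as well). Choosing $n$ ``evenly distributed'' points ensures the discrete version holds: any fraction-$\eta$ subset of $P$ contains many pairs at distance $> 2 - \theta$. Now I would use the bit-string bipartite cover $B_1,\dots,B_\ell$ inductively across the $\ell$ coordinates: given a set $S \subseteq P^\ell$ of density at least $\delta$, a Fubini/averaging argument produces many coordinate-$\ell$ fibers each of density at least $\delta/2$ inside $P^{\ell-1}$; within each such fiber one extracts an ``almost-edge'' by induction, while the concentration statement in the $\ell$-th coordinate provides the antipodal clustering needed for $B_\ell$. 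Choosing $d$ large relative to $\ell,\delta$ and $\theta_0$ small enough makes all the error terms swallowable, yielding $\alpha(H') \leq (\delta/2) |V(H')|$, say.

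For the second part, \cite[Theorem 16]{rt-balogh11} blows up each vertex of $H'$ into a strong independent set of bounded size, samples each hyperedge of the blowup independently with some probability $p$, and deletes one edge per short cycle. I would verify the two conclusions after these operations. The blowup preserves the independence ratio trivially since the blown-up strong independent sets are absorbed into any independent set. For random sparsification, a union bound shows that with positive probability every vertex subset of density $\delta$ still contains a sampled edge, so $\alpha(H) \leq \delta |V(H)|$ survives; simultaneously, a first-moment calculation bounds the expected number of cycles on at most $L$ vertices and of edge-pairs sharing two or more vertices by $o(e(H'_{\text{blowup}}) p)$, provided $p$ is chosen correctly in terms of $n$, $L$, $d$, $\theta$. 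Deleting one edge per bad configuration then costs only a negligible fraction of edges, preserves $\alpha(H) \leq \delta |V(H)|$, and leaves every $L$-vertex subhypergraph free of cycles and of pairs of hyperedges sharing two vertices, i.e.\ a linear hyperforest.

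The main obstacle is the inductive spherical argument in $P^\ell$: the edge condition demands that the $r$ tuples split antipodally according to \emph{every} bit coordinate simultaneously, so the inductive step must find not just a single antipodal pair in each coordinate but the correct combinatorial pattern across all $\ell$ coordinates at once. Handling this requires carefully choosing the quantifier order (first fix $d$ depending on $\delta$ and $\ell$, then $\theta_0$ depending on $d$ and $\delta$, then $n_0$ depending on all three) so that the concentration error at each coordinate level does not compound beyond $\delta$. The rest --- the blowup, the sampling probabilities, and the alteration step --- is routine once the independence number of $H'$ is under control.
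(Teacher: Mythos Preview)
The paper does not prove this theorem at all: it is quoted verbatim as \cite[Lemma 13]{rt-balogh11-2} and used as a black box, so there is no ``paper's own proof'' to compare your proposal against. Your sketch is a plausible reconstruction of how such a result is typically established --- sphere concentration to force near-antipodal pairs in a positive-density subset of $P$, a product/averaging argument to lift this to $P^\ell$, and then the standard blowup--sparsify--delete alteration from \cite[Theorem 16]{rt-balogh11} to obtain the locally linear-hyperforest property while preserving the independence ratio --- and you correctly flag the genuinely delicate point, namely that the edge condition requires a simultaneous antipodal pattern across all $\ell$ coordinates governed by the bipartite cover $B_1,\dots,B_\ell$, not merely one antipodal pair per coordinate. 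Whether your inductive scheme is exactly the one used in \cite{rt-balogh11-2} cannot be judged from the present paper; all that can be said here is that your outline is consistent with the construction as the paper describes it, and that the paper itself offers nothing beyond the citation.
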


We will use a subhypergraph of $H$ for our application.  Define
\begin{align*}
  V'_0 = \left\{ (p_1,\dots,p_{\ell}) \in P^{\ell} : d(p_i, p_j) \leq \sqrt{2} \text{ for all $i \neq
  j$}\right\}.
\end{align*}
Note that $V'_0$ is a subset of the vertices of $H'$.  Define $V_0$ as the set of vertices of $H$
which came from a blowup of a vertex of $V'_0$.  Let $H_0 = H[V_0]$.  An easy consequence of
Theorem~\ref{thm:Ssmallind} and some properties of the unit sphere is the following.

\begin{cor} \label{cor:Slargechrom}
Given any integers $L \geq 3$ and $k \geq 2$, it is possible to select $d$, $\theta_0 > 0$, and $n_0$ so that
$\chi(H_0(n,L,d,\theta)) \geq k$ for all $0 < \theta \leq \theta_0$ and $n \geq n_0$.  Also, every $L$
vertices in $H_0(n,L,d,\theta)$ induce a linear hyperforest.
\end{cor}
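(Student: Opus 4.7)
The plan is to derive the corollary from Theorem~\ref{thm:Ssmallind} together with a short geometric calculation showing that $V_0$ carries a positive fraction of $V(H)$. The linear hyperforest conclusion is immediate: since $H_0$ is an induced subhypergraph of $H$, any set of $L$ vertices in $H_0$ is also a set of $L$ vertices in $H$, and the final clause of Theorem~\ref{thm:Ssmallind} applies verbatim. So the substance is the chromatic lower bound.

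For the chromatic number, the plan is to use $\chi(H_0) \geq |V_0|/\alpha(H_0)$. Every independent set of $H_0$ is also independent in $H$, so $\alpha(H_0) \leq \alpha(H) \leq \delta\,|V(H)|$, where $\delta$ may be taken arbitrarily small by Theorem~\ref{thm:Ssmallind} (at the cost of enlarging $d$ and shrinking $\theta_0$). Hence it will suffice to produce a constant $c = c(d,\ell) > 0$, independent of $n$ and $\theta$, with $|V_0| \geq c\,|V(H)|$; then $\chi(H_0) \geq c/\delta$, and choosing $\delta \leq c/k$ delivers $\chi(H_0) \geq k$.

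To bound $|V_0|/|V(H)|$, note that the blowup operation is applied uniformly to every vertex of $H'$, so $|V_0|/|V(H)| = |V'_0|/|P|^\ell$. For unit vectors the inequality $\|p_i - p_j\| \leq \sqrt{2}$ is equivalent to $\langle p_i, p_j\rangle \geq 0$, i.e.\ the angular distance between $p_i$ and $p_j$ is at most $\pi/2$. Fix any spherical cap $C \subseteq \mathbb{S}^d$ of angular radius $\pi/4$; every $\ell$-tuple drawn from $P \cap C$ then lies in $V'_0$. Because $P$ is evenly distributed on $\mathbb{S}^d$, the fraction $|P \cap C|/|P|$ is bounded below by a positive constant $c_1(d)$ for all sufficiently large $n$, so $|V'_0|/|P|^\ell \geq c_1(d)^\ell$, providing the required $c(d,\ell)$.

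The main obstacle is minor: making rigorous the assertion that the evenly distributed point set $P$ places a positive fraction of its mass in every fixed cap. This is a routine discretization that either follows directly from the precise definition of $P$ in \cite{rt-balogh11-2} or from a direct volume computation combined with a suitable adjustment of $n_0$. The rest of the argument is a one-line combinatorial deduction.
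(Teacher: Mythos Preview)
Your overall outline matches the paper's proof---the linear-hyperforest claim is inherited from $H$, and the chromatic lower bound comes from $\chi(H_0)\ge |V_0|/\alpha(H_0)$ together with $\alpha(H_0)\le\alpha(H)\le\delta|V(H)|$---but there is a genuine circularity in your choice of parameters that breaks the argument as written.

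You produce a constant $c=c(d,\ell)$ with $|V_0|\ge c\,|V(H)|$ and then propose to ``choose $\delta\le c/k$''. The problem is the order of quantifiers in Theorem~\ref{thm:Ssmallind}: $\delta$ is the input and $d$ is the output, so you must fix $\delta$ \emph{before} $d$ is revealed. Your constant $c_1(d)$, the fraction of $P$ lying in a cap of angular radius $\pi/4$, genuinely depends on $d$ and in fact tends to~$0$ exponentially fast as $d\to\infty$ (by concentration of measure on high-dimensional spheres). Since Theorem~\ref{thm:Ssmallind} is a black box, nothing prevents $d=d(\delta)$ from growing so quickly that $c(d,\ell)/\delta\to 0$; in that case no choice of $\delta$ yields $c/\delta\ge k$.

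The paper's proof sidesteps this by obtaining a bound on $|V'_0|/|P|^\ell$ that is \emph{independent of $d$}. Rather than restricting all coordinates to a small cap, it builds the tuple one coordinate at a time: for each fixed $p$, the set $\{q:\|p-q\|\le\sqrt2\}$ is an exact hemisphere, of normalized measure $1/2$ in every dimension. Hence once $p_1,\dots,p_{j}$ are chosen, at least $n/2^{j}$ points of $P$ lie in all $j$ hemispheres, giving $|V'_0|\ge 2^{-\ell^2}n^\ell$. With this dimension-free $c=2^{-\ell^2}$ one can legitimately set $\delta=c/k$ \emph{first} and then invoke Theorem~\ref{thm:Ssmallind} to obtain $d,\theta_0,n_0$. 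Replace your cap argument with this hemisphere count and the rest of your write-up goes through.
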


\begin{proof} 
First, $|V'_0| \geq 2^{-\ell^2} n^\ell$ for large $n$.  Indeed, for each point $p \in P$, there are
roughly $n/2$ points within distance $\sqrt{2}$ of $p$.  In the worst case, to form a vertex
$(p_1,\dots,p_\ell)$ of $H_0$ there are $n$ ways to choose $p_1$, $n/2$ ways to choose $p_2$, $n/4$
ways to choose $p_3$, and so on.  Thus there are certainly at least $2^{-\ell^2} n^{\ell}$ vertices
of $H'_0$, even taking into account that points are evenly distributed on the sphere and there might
not be exactly $n/2$ points within distance $\sqrt{2}$ of $p$ for every $p$ ($n$ is large).

Since $n^\ell = |V(H')|$ and each vertex in $H'$ is blown up into the same number of vertices,
$|V(H_0)| \geq 2^{-\ell^2} |V(H)|$.  Now select $\delta = k^{-1} 2^{-\ell^2}$ and apply
Theorem~\ref{thm:Ssmallind} to obtain $d$, $\theta_0$, and $n_0$.  Now for $ 0 < \theta \leq
\theta_0$ and $n \geq n_0$ (and $n$ large enough for the previous paragraph to hold),
Theorem~\ref{thm:Ssmallind} implies that $\alpha(H_0) \leq \alpha(H) \leq k^{-1} 2^{-\ell^2}
|V(H)| \leq \frac{1}{k} |V(H_0)|$.  Thus $\chi(H_0) \geq k$ and the proof is complete.
\end{proof} 

We also need a simple property of the unit sphere (see~\cite[Property (P1)]{rt-balogh11}.)

\begin{lemma} \label{lem:sphereprop}
  Let $\mu$ be the Lebesgue measure on the unit sphere normalized so that $\mu(\mathbb{S}^d) = 1$.
  For any $\epsilon > 0$ there is a $\beta > 0$ depending only on $\epsilon$ so that
  for any $d \geq 3$ and any fixed $p \in \mathbb{S}^d$,
  \begin{align*}
    \mu\left(\left\{ q \in \mathbb{S}^d : \left\lVert p - q \right\rVert \leq \sqrt{2} - \beta
    \right\}\right) \geq \frac{1}{2} - \epsilon.
  \end{align*}
\end{lemma}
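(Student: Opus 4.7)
The plan is to convert the Euclidean-distance condition into an inner-product condition, reduce the measure to a one-dimensional integral, and estimate it. Since $\|p-q\|^2 = 2 - 2\langle p,q\rangle$ for $p,q \in \mathbb{S}^d$, the condition $\|p-q\| \leq \sqrt{2} - \beta$ is equivalent to $\langle p,q\rangle \geq t$, where $t := \sqrt{2}\,\beta - \beta^2/2$. The set in question is therefore a spherical cap, and it suffices to show this cap has measure at least $1/2 - \epsilon$ for $\beta$ small enough.

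By rotational invariance of $\mu$, I would take $p$ to be the north pole, so that $\langle p,q\rangle$ equals the first coordinate $q_1$ of $q$. A classical computation then gives the marginal density of $q_1$ under $\mu$ as
\[
\rho_d(u) = c_d\,(1-u^2)^{(d-2)/2}, \qquad u \in [-1,1],
\]
where $c_d = \Gamma((d+1)/2)/(\sqrt{\pi}\,\Gamma(d/2))$ is the normalizing constant. Since $\rho_d$ is symmetric about $0$, the measure of the cap equals $\tfrac{1}{2} - \int_0^{t} \rho_d(u)\,du$, so it suffices to show that $\int_0^t \rho_d(u)\,du \leq \epsilon$.

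The main estimate uses that $\rho_d$ is decreasing on $[0,1]$, hence $\int_0^t \rho_d(u)\,du \leq t\,\rho_d(0) = c_d\,t \leq \sqrt{2}\,c_d\,\beta$, and choosing $\beta$ small enough in terms of $\epsilon$ and $c_d$ completes the argument. The main subtlety is the dependence on dimension: $c_d$ grows like $\sqrt{d/(2\pi)}$, so the naive bound only yields $\beta = \beta(\epsilon,d)$. In the intended applications the dimension $d$ has been fixed in advance by Theorem~\ref{thm:Ssmallind}, so the lemma is applied with $d$ treated as a constant and this suffices; if one insists on the sharper uniform-in-$d$ statement, the naive envelope $c_d t$ must be replaced by a more refined estimate that exploits the rapid decay of $(1-u^2)^{(d-2)/2}$ away from $u=0$, treating small $d$ and large $d$ separately.
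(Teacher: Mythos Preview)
The paper does not actually prove this lemma; it simply cites \cite[Property (P1)]{rt-balogh11}.  Your direct computation via the marginal density $\rho_d(u)=c_d(1-u^2)^{(d-2)/2}$ is correct and cleanly yields the bound $\int_0^t \rho_d \le c_d\,t$, which proves the statement with $\beta=\beta(\epsilon,d)$.

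You are also right to flag the dependence on $d$, but your last sentence points in the wrong direction.  No ``more refined estimate'' will rescue the uniform-in-$d$ statement, because as written it is simply false: by concentration of measure on $\mathbb{S}^d$, for every fixed $t>0$ one has $\mu\bigl(\{q:\langle p,q\rangle\ge t\}\bigr)\to 0$ as $d\to\infty$ (indeed the decay is like $e^{-dt^2/2}$).  Hence for any fixed $\beta>0$ the cap $\{\lVert p-q\rVert\le\sqrt{2}-\beta\}$ has measure tending to $0$, not staying above $1/2-\epsilon$.  The rapid decay of $(1-u^2)^{(d-2)/2}$ away from $u=0$ works \emph{against} you here, not for you.

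Your other observation is the correct resolution: in the construction of $G(F,k,\epsilon,n)$ the dimension $d$ is determined by Corollary~\ref{cor:Slargechrom} from $L$ and $k$ alone, independently of $\beta$, so one may choose $d$ first and then take $\beta=\beta(\epsilon,d)$.  The lemma should really be read (or restated) with $\beta$ allowed to depend on $d$; with that reading your proof is complete.
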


\begin{defi}
  Given an $r$-uniform hypergraph $F$, integers $k,n \geq 2$, and an $\epsilon > 0$, we define an
  $r$-uniform hypergraph $G = G(F,k,\epsilon,n)$ as follows.  First, pick $\beta$ according to
  Lemma~\ref{lem:sphereprop}, let $L = |V(F)|$, and pick $d$, $\theta_0$, and $n_0$ according to
  Corollary~\ref{cor:Slargechrom}.  For $n \leq n_0$, let $G$ be the edgeless hypergraph.  Now
  assume $n > n_0$.  Let $f = |E(F)|$.  Next, define $\theta > 0$ so that $\theta < \theta_0$, $4^f
  \theta^{2^{-f}} < \frac{1}{10}$, and for any fixed $p \in \mathbb{S}^d$
  \begin{align} \label{eq:defofthetadiam}
    \diam\left( \left\{ q \in \mathbb{S}^d : \left\lVert p - q \right\rVert \leq \sqrt{2} - \beta
    \right\} \right) < 2 - 4^{f}\theta^{2^{-f}}.
  \end{align}
  This is possible since once $\beta > 0$ is chosen, the spherical cap centered at $p$ will not be
  the entire hemisphere so $\theta$ can be selected smaller than $\theta_0$, smaller than the
  solution to $4^f \theta^{2^{-f}} = \frac{1}{10}$, and small enough to have $2-4^f \theta^{2^{-f}}$
  between the diameter of the spherical cap centered at $p$ and $2$. All the parameters for our
  hypergraph are now chosen.

  Define a hypergraph $G = G(F,k,\epsilon,n)$ as follows.  Let $A$ be a set of size $|V(H_0(n_0, L,
  d, \linebreak[1] \theta))|$, let $C_1, \dots, C_{r-1}$ be sets of size $\frac{n}{r}$, and let $D$
  be a set of size $\frac{n}{r}$.  The vertex set of $G$ is the disjoint union $A \dot\cup C_1
  \dot\cup \cdots \dot\cup C_{r-1} \dot\cup D$.  To form the hyperedges of $G$, put a copy of
  $H_0(n_0,L,d,\theta)$ on $A$, add all hyperedges with one vertex in each $C_i$ and one vertex in
  $D$, and add the following hyperedges between $A$ and $C_i$.  Think of $C_i$ as a set of ``evenly
  spaced'' points on the unit sphere $\mathbb{S}^d$.  Make a hyperedge on $w = (p_1,\ldots,p_\ell)
  \in A$, $c_1 \in C_1, \ldots, c_{r-1} \in C_{r-1}$ if $\left\lVert p_i - c_j \right\rVert <
  \sqrt{2} - \beta$ for all $1 \leq i \leq \ell$ and all $1 \leq j \leq r-1$.
\end{defi}

To complete the first half of the proof of Theorem~\ref{mainthm}, we need to prove that $G$ has
large chromatic number, large minimum degree, and if $F$ is non-unifoliate $r$-partite then $F
\nsubseteq G$.

\begin{lemma} \label{lem:largechrom}
  Given $F$, $k$, and $\epsilon$, it is possible to select $n$ large enough so that
  $\chi(G(F,k,\epsilon,n)) \geq k$.
\end{lemma}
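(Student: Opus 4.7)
The proof plan is very short because the lemma is essentially a bookkeeping check on the parameters built into the definition of $G$.

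First, I would recall exactly how the parameters are chosen in the construction. In the definition of $G(F,k,\epsilon,n)$, the values of $d$, $\theta_0$, and $n_0$ are selected by invoking Corollary~\ref{cor:Slargechrom} with the given $k$ and with $L=|V(F)|$. That corollary then guarantees $\chi(H_0(n',L,d,\theta)) \geq k$ for every $n' \geq n_0$ and every $0<\theta\leq\theta_0$. Since the $\theta$ subsequently fixed in the construction satisfies $\theta<\theta_0$, applying the corollary at $n'=n_0$ gives $\chi(H_0(n_0,L,d,\theta)) \geq k$ on the nose.

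Next, I would argue that the induced subhypergraph $G[A]$ is a copy of $H_0(n_0,L,d,\theta)$. This is immediate from the construction: the hyperedges of $G$ fall into three families, namely the copy of $H_0(n_0,L,d,\theta)$ placed on $A$, the ``cross'' hyperedges with one vertex in each $C_i$ and one in $D$, and the hyperedges between $A$ and the $C_i$'s. Only the first family is contained in $A$, and by construction it is precisely a copy of $H_0(n_0,L,d,\theta)$ on $|V(H_0(n_0,L,d,\theta))|$ vertices.

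Combining these two observations and using monotonicity of the chromatic number under taking induced subhypergraphs, for any $n>n_0$ (so that the edgeless fallback in the definition is avoided) we get
\[
\chi(G(F,k,\epsilon,n)) \;\geq\; \chi(G[A]) \;=\; \chi(H_0(n_0,L,d,\theta)) \;\geq\; k.
\]
Hence the required ``$n$ large enough'' reduces to nothing more than $n>n_0$. There is no real obstacle: all of the work was already done in Corollary~\ref{cor:Slargechrom}, and the role of this lemma is simply to observe that placing the $H_0$-hypergraph on $A$ inside the larger hypergraph $G$ does not lower its chromatic number.
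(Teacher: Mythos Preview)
Your proposal is correct and follows exactly the same approach as the paper's proof: observe that $G[A]$ is a copy of $H_0(n_0,L,d,\theta)$, invoke Corollary~\ref{cor:Slargechrom} to get $\chi(H_0)\geq k$, and conclude by monotonicity of the chromatic number. The paper's version is just the two-line compression of what you wrote.
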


\begin{proof} 
If $n \geq n_0$, then Corollary~\ref{cor:Slargechrom} implies that $\chi(H_0) \geq k$.  Since $G[A] =
H_0$, $\chi(G) \geq k$.
\end{proof} 

\begin{lemma} \label{lem:constrmindeg}
  Given $F$, $k$, and $\epsilon$, it is possible to select $n$ large enough so that the minimum
  degree of $G = G(F,k,\epsilon,n)$ is at least $\left( \frac{(r-1)!}{r^{2r-2}} - \xi_r \epsilon
  \right) \binom{|V(G)|}{r-1}$, where $\xi_r$ is a constant depending only on $r$.
\end{lemma}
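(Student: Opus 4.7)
The plan is to bound the degree of each vertex type in $G$ separately and verify that the minimum meets the target. Since $|V(G)| = |A| + n$ with $|A|$ a constant independent of $n$, for large $n$ we have $\binom{|V(G)|}{r-1} = (1+o(1)) n^{r-1}/(r-1)!$. The hyperedges of $G$ split into three families: type~I edges of $H_0$ inside $A$, type~II edges with exactly one vertex in each $C_i$ and one in $D$, and type~III $A$--$C$ edges subject to the sphere distance condition.

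The degrees of vertices in $D$ and $C_j$ are the easy cases. A vertex $v \in D$ lies only in type~II edges, giving degree exactly $\prod_{j=1}^{r-1}|C_j| = (n/r)^{r-1}$. A vertex $v \in C_j$ lies in type~II edges (contributing $(n/r)^{r-1}$) and in type~III edges (contributing at most $|A|(n/r)^{r-2}$, a lower-order term since $|A|$ is fixed). Both degrees are asymptotically $(r-1)! \, r^{-(r-1)}\binom{|V(G)|}{r-1}$, exceeding the target by a factor of $r^{r-1}$.

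The binding case is $w \in A$. Writing $w$ as a blowup of some $(p_1,\ldots,p_\ell) \in V'_0$, type~I contributes only $O(1)$, and the type~III count is $\prod_{j=1}^{r-1}|X_j|$ with $X_j := C_j \cap \bigcap_{i=1}^{\ell} B(p_i, \sqrt{2}-\beta)$, where $B(p,t)$ denotes the Euclidean ball of radius $t$ centered at $p$. Since $C_j$ consists of $n/r$ evenly spaced points on $\mathbb{S}^d$, a discretization bound gives $|X_j| = (1-o(1))\mu_w \cdot (n/r)$, where $\mu_w$ is the normalized Lebesgue measure of $\bigcap_{i=1}^{\ell} B(p_i, \sqrt{2}-\beta)$. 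Hence the degree of $w$ is $(1-o(1))\mu_w^{r-1}(n/r)^{r-1}$, and it suffices to establish the uniform lower bound $\mu_w \geq 1/r - O(\epsilon)$; this will yield degree at least $(1-O(\epsilon))(n/r^2)^{r-1}$, matching the claimed $((r-1)!/r^{2r-2} - \xi_r\epsilon)\binom{|V(G)|}{r-1}$ after the $\epsilon$-errors are absorbed into $\xi_r$.

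The main obstacle is proving this uniform bound $\mu_w \geq 1/r - O(\epsilon)$. A naive union bound from Lemma~\ref{lem:sphereprop} fails already at $\ell = 2$, since $2(1/2 + \epsilon) > 1$. The argument must instead exploit the $V'_0$ condition $\|p_i - p_{i'}\| \leq \sqrt{2}$, which forces all $p_i$'s into a common hemisphere of $\mathbb{S}^d$, together with the precise choice $\ell = \lceil \log_2 r \rceil$ so that $2^\ell$ is only within a factor of two of $r$. A direct geometric analysis of the intersection of $\ell$ nearly-hemispherical caps with pairwise-close centers---perhaps via induction on $\ell$ or reduction to an orthant-style measure computation---should yield the required estimate. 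The paper's remark that the constant $(r-1)!/r^{2r-2}$ is not tight is consistent with this geometric step being where any slack arises.
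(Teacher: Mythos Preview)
Your approach is exactly the paper's: the same split by vertex type, and the same reduction of the $A$-vertex case to a lower bound on the measure of $\bigcap_{i=1}^{\ell} B(p_i,\sqrt{2}-\beta)$. On that step the paper is in fact \emph{less} careful than you are: it simply asserts that ``in the worst case'' the intersection contains $(2^{-\ell}-\xi_1\epsilon)|C_j|$ points, with no argument and no explicit appeal to the $V'_0$ condition. Your orthant instinct is the right way to make this rigorous: the $V'_0$ constraint $\|p_i-p_{i'}\|\leq\sqrt{2}$ gives $\langle p_i,p_{i'}\rangle\geq 0$, so the half-spaces $\{x:\langle x,p_i\rangle\geq 0\}$ are positively correlated under the spherical (equivalently Gaussian) measure, and Slepian's inequality yields $\mu\bigl(\bigcap_{i}\{x:\langle x,p_i\rangle\geq 0\}\bigr)\geq 2^{-\ell}$; trimming each hemisphere to a $(\sqrt{2}-\beta)$-cap costs only $O(\epsilon)$ by Lemma~\ref{lem:sphereprop}. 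Note also that $2^{-\ell}$ equals your target $1/r$ only when $r$ is a power of $2$; the paper silently passes from $(r2^{\ell})^{-(r-1)}$ to $r^{-(2r-2)}$, consistent with its disclaimer that the constant is not optimized.
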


\begin{proof} 
Vertices in $C_i$ have degree at least $|C_1| \cdot \ldots \cdot |C_{i-1}| |C_{i+1}| \cdot \ldots
\cdot |D| = \left( \frac{n}{r} \right)^{r-1}$ and vertices in $D$ have degree $\prod |C_i| = \left(
\frac{n}{r} \right)^{r-1}$.  Consider some vertex $(p_1,\ldots,p_\ell)$ in $A$.  By
Lemma~\ref{lem:sphereprop} there are roughly $(1/2-\epsilon)|C_i|$ elements of $C_i$ within distance
$\sqrt{2} - \beta$ of each of $p_i$.  In the worst case, there are only $(2^{-\ell} - \xi_1\epsilon)
|C_i|$ elements of $C_i$ within $\sqrt{2} - \beta$ of all of $p_1, \ldots, p_\ell$, where $\xi_1$ is a
constant depending only on $\ell$ with $\xi_1 \epsilon$ much less than $2^{-\ell}$.  Thus the vertex
$(p_1, \ldots, p_\ell) $ will have degree at least $(2^{-\ell(r-1)} - \xi_2\epsilon) \prod \left| C_i
\right|$ where again $\xi_2$ is some constant depending only on $\ell$.  Therefore, if $n$ is
sufficiently large, the minimum degree of $G$ is at least
\begin{align} \label{eq:mindegree}
  \min \left\{ \left( \frac{n}{r} \right)^{r-1}, (2^{-\ell(r-1)} - \xi_2 \epsilon) \left( \frac{n}{r}
  \right)^{r-1} \right\} \geq \left(\frac{n}{r2^\ell}\right)^{r-1} - \epsilon \xi_3 n^{r-1},
\end{align}
where $\xi_3$ is a constant depending only on $r$.  Since $\ell = \left\lceil \log_2 r \right\rceil$,
$|V(G)| = n + |V(H_0)|$, and $|V(H_0)|$ is fixed, we can choose $n$ large enough so that
\begin{align*}
  \delta(G) \geq \frac{(r-1)!}{r^{2r-2}} \binom{|V(G)|}{r-1} - \epsilon \xi_4 n^{r-1},
\end{align*}
for some constant $\xi_4$ depending only on $r$.  The proof is now complete, but note that
the relative sizes of $C_i$ and $D$ could be optimized to obtain equality in the minimum in
\eqref{eq:mindegree} and thus a better bound on the minimum degree of $G$.
\end{proof} 

We now turn our attention to proving that if $F$ is non-unifoliate $r$-partite, then $G$ does not
contain a copy of $F$.  For this, we need a couple of helpful lemmas.  Throughout the rest of this
section, for $x,y \in \mathbb{S}^d$, $\rho(x,y)$ denotes the Euclidean distance between $x$ and $y$.

\begin{lemma} \label{lem:nearorfarpoints}
  Let $x$, $y$, and $z$ be points on $\mathbb{S}^d$ and let $0 <  a < \frac{1}{10}$.  If
  \begin{itemize}
    \setlength{\itemsep}{0pt}
    \setlength{\parskip}{0pt}
    \setlength{\parsep}{0pt}
    \item either $\rho(x,y) < a$ or $\rho(x,y) > 2 - a$,
    \item and either $\rho(y,z) < a$ or $\rho(y,z) > 2 - a$,
  \end{itemize}
  then either $\rho(x,z) < 4\sqrt{a}$ or $\rho(x,z) > 2 - 4\sqrt{a}$.
\end{lemma}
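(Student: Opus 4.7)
The plan is a four-case analysis according to which of the two alternatives holds for each of the pairs $(x,y)$ and $(y,z)$. These four cases collapse to three essentially distinct scenarios, and in each one we apply the ordinary triangle inequality in $\mathbb{R}^{d+1}$ (restricted to $\mathbb{S}^d$), possibly after an antipodal trick.

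First, if $\rho(x,y) < a$ and $\rho(y,z) < a$, then the triangle inequality gives $\rho(x,z) < 2a$, and I would finish using the elementary inequality $2a < 4\sqrt{a}$, which is valid since $a < 1/10 < 1$. Second, if the two distances are on opposite sides, say $\rho(x,y) < a$ and $\rho(y,z) > 2-a$ (the other subcase being symmetric), then the reverse triangle inequality gives $\rho(x,z) \geq \rho(y,z) - \rho(x,y) > 2 - 2a > 2 - 4\sqrt{a}$.

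The interesting case is when both distances are ``far'', i.e.\ $\rho(x,y) > 2-a$ and $\rho(y,z) > 2-a$; here routing through $y$ is useless. The key is the antipodal identity $\rho(p,q)^2 + \rho(p,-q)^2 = 4$ on the unit sphere, which follows immediately from $\rho(p,q)^2 = 2 - 2(p\cdot q)$ and $\rho(p,-q)^2 = 2 + 2(p\cdot q)$. This converts a ``far from $y$'' condition into a ``close to $-y$'' condition: $\rho(x,y) > 2-a$ forces $\rho(x,-y)^2 < 4 - (2-a)^2 < 4a$, hence $\rho(x,-y) < 2\sqrt{a}$, and similarly $\rho(z,-y) < 2\sqrt{a}$. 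Applying the triangle inequality through the intermediate point $-y$ then yields $\rho(x,z) \leq \rho(x,-y) + \rho(-y,z) < 4\sqrt{a}$.

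The only real obstacle is recognizing the antipodal trick in the ``both far'' case, since passing through $y$ itself yields no useful information. The remainder of the argument is routine triangle-inequality bookkeeping together with the elementary bound $2a < 4\sqrt{a}$, and the constant $4$ in the conclusion is exactly what this bookkeeping produces.
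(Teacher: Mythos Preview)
Your proof is correct and follows the same four-case structure as the paper, including the identical antipodal trick (via the identity $\rho(p,q)^2+\rho(p,-q)^2=4$) in the ``both far'' case.

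The one genuine difference is your treatment of the mixed case. You dispatch it in one line with the reverse triangle inequality, obtaining $\rho(x,z)>2-2a>2-4\sqrt{a}$. The paper instead applies the antipodal trick here as well: taking $x'=-x$, it bounds $\rho(x',y)<2\sqrt{a}$, then $\rho(x',z)\leq 3\sqrt{a}$ by the triangle inequality, and finally converts back via $\rho(x,z)^2=4-\rho(x',z)^2\geq 4-9a>(2-4\sqrt{a})^2$. Your route is shorter and yields a slightly better constant; the paper's route has the minor advantage of using the same antipodal mechanism uniformly across the nontrivial cases, which perhaps motivates why the $\sqrt{a}$ loss is unavoidable in the statement (it is genuinely needed only in the ``both far'' case).
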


\begin{proof} 
There are four cases.  If $\rho(x,y) < a$ and $\rho(y,z) < a$, then the triangle inequality implies that
$\rho(x,z) < 2a$.  If $\rho(x,y) > 2 - a$ and $\rho(y,z) < a$, then let $x'$ be the point antipodal to $x$ on
the sphere.  Since $x,x',y$ forms a right triangle (with right angle at $y$),
\begin{align*}
  \rho^2(x',y) = \rho^2(x,x') - \rho^2(x,y) < 4 - (2-a)^2 \leq 4a,
\end{align*}
so by the triangle inequality, $\rho(x',z) \leq \rho(x',y) + \rho(y,z) \leq 2\sqrt{a} + a \leq 3\sqrt{a}$.
Now using that $x,x',z$ forms a right triangle (with right angle at $z$),
\begin{align*}
  \rho^2(x,z) = \rho^2(x,x') - \rho^2(x',z) \geq 4 - 9a > 4 - 16\sqrt{a} + 16a = (2-4\sqrt{a})^2.
\end{align*}
(The last inequality used that $-9a > -16\sqrt{a} + 16a$ since $a < \frac{1}{10}$.)  This
finishes the case that $\rho(x,y) > 2-a$ and $\rho(y,z) < a$ and also finishes the case $\rho(x,y) < a$ and
$\rho(y,z)> 2-a$ by symmetry.  The last case is when $\rho(x,y) > 2 - a$ and $\rho(y,z) > 2-a$.  Let $y'$ be
the point antipodal to $y$ on the sphere.  Then
\begin{align*}
  \rho^2(x,y') &= \rho^2(y,y') - \rho^2(x,y) < 4 - (2-a)^2 \leq 4a, \\
  \rho^2(z,y') &= \rho^2(y,y') - \rho^2(z,y) < 4 - (2-a)^2 \leq 4a, \\
  \rho(x,z) &\leq \rho(x,y') + \rho(y',z) < 4\sqrt{a}.
\end{align*}
\end{proof} 

\begin{lemma} \label{lem:pointsclose}
  Let $x = (x_1,\dots,x_\ell)$ and $y = (y_1,\dots,y_\ell)$ be two distinct vertices of $H_0 =
  H_0(n_0,L,d,\theta)$ contained together in a hyperedge of $H_0$.  Then for every $1 \leq i \leq
  \ell$, either $\rho(x_i, y_i) > 2 - 4\sqrt{\theta}$ or $\rho(x_i, y_i) < 4\sqrt{\theta}$.
\end{lemma}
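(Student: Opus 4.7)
The plan is to chase through the definition of edges in the underlying hypergraph $H'$ and combine it with Lemma~\ref{lem:nearorfarpoints}. Since $H_0$ sits inside the blowup $H$ of $H'$ and $x,y$ lie together in some hyperedge, the projection to $H'$ gives an ordering $\bar{x}^1,\dots,\bar{x}^r$ of an edge of $H'$ with $x=\bar{x}^a$ and $y=\bar{x}^b$ for some distinct $a,b\in[r]$. By the definition of $H'$, for every $1\le i\le\ell$ and every $pq\in E(B_i)$ we have $\rho(x_i^p,x_i^q)>2-\theta$. Note that the parameter $\theta$ chosen in the construction of $G$ satisfies $\theta<1/10$, so Lemma~\ref{lem:nearorfarpoints} applies with $a=\theta$.

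Fix a coordinate $i$ and split into two cases based on the bit strings assigned to $a$ and $b$. If $a$ and $b$ differ in the $i$-th bit, then $ab\in E(B_i)$ and directly $\rho(x_i,y_i)>2-\theta\ge 2-4\sqrt{\theta}$, as needed. Otherwise $a$ and $b$ agree in the $i$-th bit; since the bit strings of length $\lceil\log_2 r\rceil$ are distinct, at least one element $c\in[r]$ has the opposite $i$-th bit, so both $ac$ and $bc$ lie in $E(B_i)$. This yields $\rho(x_i,x_i^c)>2-\theta$ and $\rho(y_i,x_i^c)>2-\theta$.

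Applying Lemma~\ref{lem:nearorfarpoints} to the triple $x_i,\,x_i^c,\,y_i$ (playing the roles of $x,y,z$) with $a=\theta$ concludes that $\rho(x_i,y_i)<4\sqrt{\theta}$ or $\rho(x_i,y_i)>2-4\sqrt{\theta}$. Combining the two cases gives the desired dichotomy for every coordinate $i$.

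The main, and really only, obstacle is a bookkeeping one: making sure that in the second case one can always find the auxiliary index $c$ with the opposite $i$-th bit. This is immediate from the standard assignment of length-$\lceil\log_2 r\rceil$ bit strings to $[r]$, since any nontrivial coordinate is realized by both $0$ and $1$ among the assigned strings (trivial coordinates can be discarded without altering $\cup B_i = K_{[r]}$). Everything else is a direct citation of Lemma~\ref{lem:nearorfarpoints} and an inspection of $E(B_i)$.
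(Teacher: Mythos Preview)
Your proof is correct and follows essentially the same route as the paper's: identify the ordering of the edge of $H'$ guaranteed by its definition, split on whether $ab\in E(B_i)$, and in the remaining case pick a common neighbour $c$ of $a$ and $b$ in $B_i$ and invoke Lemma~\ref{lem:nearorfarpoints}. Your final paragraph on locating the auxiliary index $c$ is a bit more explicit than the paper, which simply notes that $a$ and $b$ lie in the same part of the complete bipartite graph $B_i$ and takes any vertex from the other side; otherwise the arguments are identical.
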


\begin{proof} 
Let $E$ be the edge containing $x$ and $y$ and let $\bar{z}^1, \dots, \bar{z}^r$ be an ordering of
the vertices of $E$ such that if $ab \in E(B_i)$ then $\rho(z^a_i, z^b_i) > 2 - \theta$ (recall that
the vertex $\bar{z}^j$ consists of an $\ell$-tuple $(z^j_1,\dots,z^j_\ell)$ with each $z^j_i$ a
point on the unit sphere).  Let $1 \leq a \neq b \leq r$ such that $x = \bar{z}^a$ and $y =
\bar{z}^b$.  If $ab \in E(B_i)$ then $\rho(x_i, y_i) > 2 - \theta$.  If $ab \notin E(B_i)$, they
appear in the same part of the bipartite graph $B_i$ so let $c$ be a common neighbor of $a$ and $b$
in $B_i$.  Now since $ac, cb \in E(B_i)$, we have that $\rho(x_i, z^c_i) > 2 - \theta$ and
$\rho(y_i, z^c_i) > 2 - \theta$ so that Lemma~\ref{lem:nearorfarpoints} implies that $\rho(x_i, y_i)
< 4\sqrt{\theta}$ or $\rho(x_i, y_i) > 2 - 4\sqrt{\theta}$ (in fact the proof of
Lemma~\ref{lem:nearorfarpoints} implies that $\rho(x_i,y_i) < 4\sqrt{\theta}$).
\end{proof} 

\begin{lemma} \label{lem:closeorfar}
  Let $x = (x_1,\dots,x_\ell)$ and $y = (y_1,\dots,y_\ell)$ be two distinct vertices of $H_0 =
  H_0(n_0,L,d,\theta)$ which appear in the same component of $H_0$ and are at distance at most $f =
  |E(F)|$.  That is, there exist hyperedges $E_1, \dots, E_q$ of $H_0$ where $q \leq f$, $x \in
  E_1$, $y \in E_q$, and $E_i \cap E_{i+1} \neq \emptyset$.  Then for every $1 \leq i \leq \ell$,
  either $\rho(x_i, y_i) > 2 - 4^{f}\theta^{2^{-f}}$ or $\rho(x_i, y_i) < 4^{f}\theta^{2^{-f}}$.
\end{lemma}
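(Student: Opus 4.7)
I would induct on $q$, the length of the shortest edge-sequence $E_1,\dots,E_q$ witnessing that $x$ and $y$ are in the same component at distance at most $f$, so $1 \leq q \leq f$. The base case $q=1$ follows directly from Lemma~\ref{lem:pointsclose}, which gives the dichotomy with constant $4\sqrt{\theta} = 4^1 \theta^{2^{-1}}$; this is at most $4^f \theta^{2^{-f}}$ by the monotonicity fact stated below, so both branches imply the claim.

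For the inductive step ($q \geq 2$), I would fix a coordinate $i$ and pick a vertex $z \in E_{q-1} \cap E_q$. The degenerate cases $z=x$ and $z=y$ are handled separately: if $z=x$ then $x,y$ share the edge $E_q$ and Lemma~\ref{lem:pointsclose} finishes, and if $z=y$ then the path $E_1,\dots,E_{q-1}$ connects $x$ to $y$ and the inductive hypothesis applies at length $q-1$. Otherwise $z \notin \{x,y\}$: apply the inductive hypothesis to the pair $(x,z)$ along $E_1,\dots,E_{q-1}$ to obtain
\[
\rho(x_i,z_i) < 4^{q-1}\theta^{2^{-(q-1)}} \quad\text{or}\quad \rho(x_i,z_i) > 2 - 4^{q-1}\theta^{2^{-(q-1)}},
\]
and apply Lemma~\ref{lem:pointsclose} to the pair $(z,y)\subseteq E_q$ to obtain the same dichotomy with constant $4\sqrt{\theta}$. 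Setting $a := 4^{q-1}\theta^{2^{-(q-1)}}$, both pairs satisfy the dichotomy with the common constant $a$, so Lemma~\ref{lem:nearorfarpoints} yields $\rho(x_i,y_i) < 4\sqrt{a}$ or $\rho(x_i,y_i) > 2 - 4\sqrt{a}$.

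The only work is to verify (i) the constant comes out right and (ii) the hypothesis $a < 1/10$ needed by Lemma~\ref{lem:nearorfarpoints} holds. For (i), compute
\[
4\sqrt{a} \;=\; 4\sqrt{4^{q-1}\theta^{2^{-(q-1)}}} \;=\; 2^{q+1}\theta^{2^{-q}} \;\leq\; 2^{2q}\theta^{2^{-q}} \;=\; 4^q\theta^{2^{-q}},
\]
using $q \geq 1$. For (ii), I would establish the monotonicity of $g(t) := 4^t \theta^{2^{-t}}$: since $\log g(t) = t\log 4 + 2^{-t}\log\theta$ has derivative $\log 4 - 2^{-t}(\log 2)(\log\theta) > 0$ (as $\log\theta < 0$ because $\theta < 1$), $g$ is strictly increasing, so $a = g(q-1) \leq g(f) = 4^f \theta^{2^{-f}} < \tfrac{1}{10}$ by the choice of $\theta$ in the definition of $G$. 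Finally, since the conclusion gives the bound $4^q\theta^{2^{-q}} \leq 4^f \theta^{2^{-f}}$, the induction closes. The main obstacle is purely bookkeeping: making sure the exponents telescope as $\tfrac{1}{2}\cdot 2^{-(q-1)} = 2^{-q}$ under the iterated square root, and that the monotonicity of $g$ is used consistently to pass between the step-wise constants and the final uniform bound $4^f\theta^{2^{-f}}$.
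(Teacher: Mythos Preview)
Your proposal is correct and follows essentially the same approach as the paper: both argue by induction along the edge-sequence, using Lemma~\ref{lem:pointsclose} for each step and Lemma~\ref{lem:nearorfarpoints} to combine, with the same telescoping constant $4^j\theta^{2^{-j}}$. The only cosmetic difference is that the paper fixes the path and inducts on the intermediate vertex index $j$ (with $e^j\in E_j\cap E_{j+1}$), whereas you induct on the path length $q$; the arithmetic check $4\sqrt{4^{q-1}\theta^{2^{-(q-1)}}}=2^{q+1}\theta^{2^{-q}}\leq 4^q\theta^{2^{-q}}$ and the use of $4^f\theta^{2^{-f}}<\tfrac{1}{10}$ to justify Lemma~\ref{lem:nearorfarpoints} are identical to the paper's.
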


\begin{proof} 
Consider a path $E_1, \dots, E_q$ from $x$ to $y$ with $q \leq f$ and fix $1 \leq i \leq \ell$.
For $1 \leq j \leq q - 1$, let $e^j \in E_j \cap E_{j+1}$.  Recall that $x = (x_1, \dots, x_\ell)$,
$y = (y_1,\dots,y_\ell)$, and $e^j = (e^j_1,\dots,e^j_\ell)$ where $x_i$, $y_i$, and $e^j_i$ are
points on the sphere.  We will prove by induction on $j$ that either $\rho(x_i, e^j_i) < 4^{j}
\theta^{2^{-j}}$ or $\rho(x_i, e^j_i) > 2 - 4^{j} \theta^{2^{-j}}$.  The base case of $j = 1$ is
just Lemma~\ref{lem:pointsclose}, since $x$ and $e^1$ are contained together in a hyperedge.  For
the inductive step, the fact that $e^j$ and $e^{j+1}$ are contained together in a hyperedge,
Lemma~\ref{lem:pointsclose}, and induction imply that
\begin{align*}
  &\text{either} \quad \rho(x_i, e^j_i) < 4^{j} \theta^{2^{-j}} \quad \text{or} \quad
                   \rho(x_i, e^j_i) > 2 - 4^{j} \theta^{2^{-j}}, \\
  &\text{either} \quad \rho(e^j_i, e^{j+1}_i) < 4\sqrt{\theta} \quad \text{or} \quad
                   \rho(e^j_i, e^{j+1}_i) > 2 - 4\sqrt{\theta}.
\end{align*}
Since $4^{j} \theta^{2^{-j}} \geq 4\sqrt{\theta}$, Lemma~\ref{lem:nearorfarpoints} shows that (note
we selected $\theta$ so that $4^f \theta^{2^{-f}} < \frac{1}{10}$)
\begin{align*}
  &\text{either} \quad \rho(x_i, e^{j+1}_i) < 4 \sqrt{4^{j} \theta^{2^{-j}} } \quad \text{or} \quad
                   \rho(x_i, e^{j+1}_i) > 2 - 4 \sqrt{4^{j} \theta^{2^{-j}} }.
\end{align*}
Since $4 \sqrt{4^{j} \theta^{2^{-j}} } \leq 4^{j+1} \theta^{2^{-j-1}}$, the proof of the inductive
step is complete.

Therefore either $\rho(x_i, e^{q-1}_i) < 4^{q-1} \theta^{2^{-q+1}}$ or $\rho(x_i, e^{q-1}_i) > 2 -
4^{q-1} \theta^{2^{-q+1}}$.  But since $e^{q-1}$ and $y$ are contained together in the hyperedge
$E_q$, one last application of Lemmas~\ref{lem:nearorfarpoints} and~\ref{lem:pointsclose} show that
either $\rho(x_i, y_i) < 4^q \theta^{2^{-q}}$ or $\rho(x_i, y_i) > 2 - 4^q \theta^{2^{-q}}$.  Since
$q \leq f$, the proof is complete.
\end{proof} 

\begin{lemma} \label{lem:constrnounifoliate}
  If $F$ is not unifoliate $r$-partite, then $G$ does not contain a copy of $F$.
\end{lemma}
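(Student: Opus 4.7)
Plan: Prove the contrapositive. Given an embedding $\alpha: F \hookrightarrow G$, define the $r$-partition $V_1 := \alpha^{-1}(A \cup D)$ and $V_i := \alpha^{-1}(C_{i-1})$ for $2 \leq i \leq r$, and argue that this partition witnesses $F$ being unifoliate $r$-partite. Any edge of $F[V_1]$ maps to a $G$-edge entirely inside $A \cup D$; since every cross-edge of $G$ has a vertex in each $C_k$, such an edge must lie in $H_0 \subseteq G[A]$. Hence $\alpha(F[V_1]) \subseteq H_0$ with image on at most $L = |V(F)|$ vertices, so Corollary~\ref{cor:Slargechrom} forces it to be a linear hyperforest, giving clause~(a); note also that every $V_1$-vertex incident to an $F[V_1]$-edge must map to $A$, not $D$. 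For clause~(b), an edge of $F$ outside $F[V_1]$ has a vertex outside $V_1$, so its image is a cross-edge of $G$, which by construction has one vertex in each of $A \cup D, C_1, \ldots, C_{r-1}$.

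The main work is clause~(c). Suppose for contradiction that $F$ has a bad cycle $x_1, \ldots, x_t$ with edges $E_1, \ldots, E_t$, unique $F[V_1]$-edge $E^* := E_j$, and all $V_1$-cycle-vertices in one component of $F[V_1]$. Since this component contains the non-trivial edge $E^*$, every $V_1$-cycle-vertex is incident to some $F[V_1]$-edge and hence maps to $A$. Write $p := \alpha(x_j)$ and $q := \alpha(x_{j+1}) \in A$; since $\alpha(E^*)$ is an $H_0$-hyperedge, the distinct labels that $p$ and $q$ receive in the witnessing ordering differ in some bit $i_0$ of their bit strings, and the bipartite structure $B_{i_0}$ forces $\rho(p_{i_0}, q_{i_0}) > 2 - \theta$. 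I plan to derive, for every coordinate $m$, the bound $\rho(p_m, q_m) < 2 - 4^f \theta^{2^{-f}}$; combined with Lemma~\ref{lem:closeorfar} applied to the $H_0$-adjacent pair $p, q$, this upgrades to $\rho(p_m, q_m) < 4^f \theta^{2^{-f}}$ for all $m$, which contradicts the existence of $i_0$ because $\theta < 4^f \theta^{2^{-f}}$.

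The geometric input for the bound is that each $(A, C)$-type cross-edge places every coordinate of its $A$-vertex within $\sqrt{2} - \beta$ of each $C$-vertex of the edge, so by~\eqref{eq:defofthetadiam} those coordinates lie in a common spherical cap of diameter $< 2 - 4^f \theta^{2^{-f}}$. Tracing the cycle from $x_{j+1}$ back to $x_j$ along the cross-edges $E_{j+1}, \ldots, E_{j-1}$ and chaining these cap memberships should yield the desired bound. The hard part will be handling ``middle'' cross-edges whose cycle-endpoints both lie outside $V_1$: the $V_1$-vertex of such an edge is a non-cycle vertex that might map to $D$, yielding a $(C_1, \ldots, C_{r-1}, D)$-type edge with no cap constraint to offer. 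The bad-cycle hypothesis---that every $V_1$-cycle-vertex lies in a single $F[V_1]$-component and therefore in $A$---is precisely the input that should let us bridge these gaps, by anchoring successive cap arguments at the $V_1$-cycle-vertices (a backbone of $A$-mapped anchors distributed around the cycle) and piping coordinates through the shared $C$-vertices appearing on consecutive cross-edges.
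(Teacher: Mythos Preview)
Your overall strategy---taking the contrapositive and using the partition $V_1 = \alpha^{-1}(A \cup D)$, $V_i = \alpha^{-1}(C_{i-1})$---is sound, and clauses (a) and (b) are handled correctly. The gap is in clause (c), where you have misidentified what a ``bad'' cycle is, and your proposed workaround does not close the resulting hole.

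You read the condition as requiring only the distinguished cycle-vertices $x_1,\ldots,x_t$ to lie in a single component of $F[V_1]$. Under that reading a bad cycle may contain a cross-edge $E_s$ whose two cycle-vertices $x_s,x_{s+1}$ both lie outside $V_1$ while the unique $V_1$-vertex of $E_s$ is a non-cycle vertex mapping to $D$. Such a $(D,C_1,\ldots,C_{r-1})$-edge carries no spherical-cap constraint at all, and your ``anchoring'' plan cannot bridge it: the shared cycle-vertices $x_s$ and $x_{s+1}$ sit in different $C_k$'s (since $E_s$ meets each $C_k$ once), so even if the neighbouring edges $E_{s-1},E_{s+1}$ are of $(A,C)$-type, the caps they impose are centred at two unrelated points of the sphere and give no bound on $\rho(p_m,q_m)$. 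Concretely, for $r=3$ take a $4$-cycle with only $x_j,x_{j+1}$ in $V_1$; a single $(D,C_1,C_2)$-edge among the three cross-edges breaks the chain completely.

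The fix is to read ``the cycle uses vertices from at least two components of $F[V_1]$'' as referring to \emph{all} vertices appearing in the cycle's edges, not just $x_1,\ldots,x_t$. This is the paper's reading (see its Claim~2), and it dissolves the difficulty: any cross-edge whose $V_1$-vertex $w$ maps to $D$ has $w$ isolated in $F[V_1]$ (every edge of $G$ through $D$ is a cross-edge), so $\{w\}$ is its own component, distinct from that of $E^*$, and the cycle is already good. Hence in a genuinely bad cycle every cross-edge is of $(A,C_1,\ldots,C_{r-1})$-type, with unique $A$-vertex $e^s$, and all the $e^s$'s lie in one component of $H_0$. Now the paper's argument (its Claim~1) applies verbatim: consecutive $e^s,e^{s+1}$ share a $C$-vertex, the cap-diameter bound \eqref{eq:defofthetadiam} gives $\rho(e^s_i,e^{s+1}_i) < 2-4^f\theta^{2^{-f}}$, Lemma~\ref{lem:closeorfar} upgrades this to $\rho(e^s_i,e^{s+1}_i) < 4^f\theta^{2^{-f}}$, and the triangle inequality along the chain contradicts $\rho(p_{i_0},q_{i_0}) > 2-\theta$. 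The paper separates these concerns by first treating the restriction $F' = F[A\cup C_1\cup\dots\cup C_{r-1}]$ and only afterwards adjoining the $D$-vertices to $V_1$; your one-step partition works too once the reading of ``uses vertices'' is corrected.
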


\begin{proof} 
Assume $G$ contains a copy of $F$; we will prove that $F$ is unifoliate $r$-partite, a
contradiction.  Any copy of $F$ must use vertices of $A$ since otherwise it would be $r$-partite.
It cannot be completely contained in $A$ since by construction any $L$ vertices in $A$ induce a
linear hyperforest and we set $L = |V(F)|$.  Let $F'$ be the subhypergraph of $F$ formed by
restricting $F$ to $A \cup C_1 \cup \dots \cup C_{r-1}$.  We first show that $F'$ is unifoliate
$r$-partite and then secondly show that this implies that $F$ is unifoliate $r$-partite.

\medskip

\noindent\textbf{Claim 1:} $F'$ is unifoliate $r$-partite.

\begin{proof} 
Let $X = V(F') \cap A$ so $G[X]$ is a linear hyperforest.  We claim that $V_1 = X$, $V_2 = V(F')
\cap C_1, \dots, V_r = V(F') \cap C_{r-1}$ is a partition witnessing that $F'$ is unifoliate
$r$-partite.  First, $F'[X]$ is a linear hypertree since $G[X]$ is a linear hypertree.  Also, all
other edges cross the partition since edges in $G$ not completely contained in $A$ and not using
vertices of $D$ use one vertex of $A$ and one vertex from each $C_i$.  Now consider a cycle $C$
using exactly one edge $E$ of $G[X]$ and cross-edges which intersect $A$ only in vertices in the
same component of the hyperforest as $E$.  We will show that such a cycle does not exist by deriving
a contradiction.

Let $E_1, \dots, E_m$ be the edges of the cycle $C$ labeled so that $E_1$ is the edge of $C$ in
$G[X]$ and $E_i \cap E_{i+1} \neq \emptyset$.  Let $x \in E_1 \cap E_2$ and $y \in E_1 \cap E_m$ so
that $x \neq y$.  Since the bipartite graphs $B_1, \dots, B_\ell$ cover $K_{[r]}$, there is some $1
\leq i \leq \ell$ such that $\rho(x_i, y_i) > 2 - \theta$.  Fix such an $i$ for the remainder of
this proof.  For $2 \leq s \leq m$, let $e^s \in E_s \cap A$ (there is a unique such $e^s$ since
these are all cross-edges).  Note that $x = e^2$ and $y = e^m$.  Finally, let $f = |E(F)|$.

We now claim that for all $2 \leq s \leq m$, $\rho(x_i, e^s_i) < s 4^{f}\theta^{2^{-f}}$.  Since
$e^m = y$, this will contradict that  $\rho(x_i, y_i) > 2 - \theta$.  We prove that $\rho(x_i,e^s_i)
< s4^{f}\theta^{2^{-f}}$ by induction.  First, since $C$ uses vertices in $H_0[X]$ within the
same component as $E_1$, the vertices $e^s$ are all within the same component of $H_0$ so
Lemma~\ref{lem:closeorfar} implies that either their $i$th coordinates (pairwise) are within
distance $4^f\theta^{2^{-f}}$ of each other or have distance at least $2-4^f\theta^{2^{-f}}$.  In
particular, either $\rho(e^s_i, e^{s+1}_i) < 4^f\theta^{2^{-f}}$ or $\rho(e^s_i, e^{s+1}_i) > 2 -
4^f\theta^{2^{-f}}$ and we claim that the latter is impossible.  First, if $e^s = e^{s+1}$ then
obviously $\rho(e^s_i, e^{s+1}_i) = 0 < 4^f \theta^{2^{-f}}$.  For $e^s \neq e^{s+1}$, since $E_s
\cap E_{s+1} \neq \emptyset$, there exists a vertex $z \in E_s \cap E_{s+1} \cap (C_1 \cup \dots
\cup C_{r-1})$.  Since $z$ is contained together with $e^s$ in the cross-hyperedge $E_s$, by the
definition of cross-edges of $G$ we have that $\rho(e^s_i, z_i) < \sqrt{2} - \beta$.  Similarly,
$\rho(e^{s+1}_i, z_i) < \sqrt{2}-\beta$.  By \eqref{eq:defofthetadiam} (with $p = z_i$),
$\rho(e^{s+1}_i, e^s_i) < 2 - 4^f\theta^{2^{-f}}$ since both are within distance $\sqrt{2} - \beta$
of $z_i$.  By Lemma~\ref{lem:closeorfar}, this implies that $\rho(e^s_i, e^{s+1}_i) < 4^f
\theta^{2^{-f}}$.  By induction and the triangle inequality, $\rho(e^2_i, e^s_i) <
s4^f\theta^{2^{-f}}$.  Since $x = e^2$ and $y = e^m$, $\rho(x_i, y_i) < m4^f\theta^{2^{-f}}$, a
contradiction of the coordinate choice $i$.  This contradiction proves that no such cycle can exist
so that $V_1, \dots, V_r$ witnesses that $F'$ is unifoliate $r$-partite.
\end{proof} 

\medskip

\noindent\textbf{Claim 2:} $F$ is unifoliate $r$-partite.

\begin{proof} 
We will show that we can extend $V_1, \dots, V_t$ to be a witness to the fact that $F$ is unifoliate
$r$-partite, contradicting the choice of $F$.  Indeed, let $W_1 = V_1 \cup (V(F) \cap D), W_2 = V_2,
\dots, W_t = V_t$ be a partition of $V(F)$.  That is, add all vertices which appear in $V(F) \cap D$
to $V_1$.  Note that edges of $F$ are either contained in $W_1$ or use one vertex from each $W_i$,
since the edges of $F$ touching $D$ use one vertex from $D$ and one vertex from each $C_i$ and
$W_{i+1} \subseteq C_i$.  Also, $F[W_1]$ is still a linear hyperforest since no edges of $F - F'$
were added inside $W_1$.  Finally, consider a cycle $C$.  If $C$ uses no edges intersecting $D$ then
it is a cycle in $F'$ so it is good.  If $C$ uses an edge $E$ of $F - F'$, then let $\{x\} = E \cap
W_1$ and notice that $x$ is an isolated vertex in $F[W_1]$.  Since $C$ must use at least one more
edge and $x$ is isolated, the cycle either uses another vertex of $W_1$ so uses vertices from at
least two components of $F[W_1]$, or $C$ uses only the vertex $x$ from $W_1$ and so uses zero edges
of $F[W_1]$.  Thus $W_1, \dots, W_t$ witnesses that $F$ is unifoliate $r$-partite.
\end{proof} 
Claim 2 contradicts the assumption that $F$ is not unifoliate $r$-partite, completing the proof.
\end{proof} 

We shall now prove half of Theorem~\ref{mainthm}, i.e.\ that if $F$ is a non-unifoliate $r$-partite
hypergraph, then the family of $F$-free hypergraphs has chromatic threshold at least
$(r-1)!r^{-2r+2}$.  We must prove that the infimum of the values $c$ such that the family of
$F$-free hypergraphs $H$ with minimum degree at least $c\binom{|V(H)|}{r-1}$ has bounded chromatic
number.  So consider $c < \frac{(r-1)!}{r^{2r-2}}$ and assume the family
\begin{align*}
  \mathcal{H} = \left\{ H : F \nsubseteq H, \delta(H) \geq c \binom{|V(H)|}{r-1} \right\}
\end{align*}
has bounded chromatic number.  That is, there exists some integer $k$ so that $\chi(H) < k$ for
every $H \in \mathcal{H}$.  Now define $\epsilon > 0$ so that
\begin{align*}
  c < \frac{(r-1)!}{r^{2r-2}} - \xi_r \epsilon
\end{align*}
where $\xi_r$ is the constant depending only on $r$ from Lemma~\ref{lem:constrmindeg}.  Now that we
have chosen $F$, $k$, and $\epsilon$, by Lemmas~\ref{lem:largechrom},~\ref{lem:constrmindeg},
and~\ref{lem:constrnounifoliate}, we can select $n$ large enough so that $G = G(F,k,\epsilon,n)$ is
an element of $\mathcal{H}$ with chromatic number at least $k$, a contradiction.

\section{Strong Unifoliate $r$-partite hypergraphs} 
\label{sec:upperbound}

In this section, we prove that if $G$ is an $r$-uniform, strong unifoliate $r$-partite hypergraph, then the
family of $G$-free hypergraphs has chromatic threshold zero.

\begin{defi}
  Let $d$ be an integer, $H$ an $r$-uniform hypergraph, and $v \in V(H)$.  The hypergraph $H$ is
  \emph{$(d,v)$-bounded} if there is a vertex set $A_v \subseteq V(H)$ with $\left| A_v \right| \leq
  d$ such that every edge of $H$ containing $v$ intersects $A_v$.  A hypergraph $H$ is
  \emph{$d$-degenerate} if there exists an ordering $v_1, \ldots, v_n$ of the vertices such that for
  every $1 \leq i \leq n$, the hypergraph $H[v_1,\ldots,v_i]$ is $(d,v_i)$-bounded.
\end{defi}

\begin{lemma} \label{lem:degencolor}
  Let $H$ be a $d$-degenerate hypergraph.  Then $\chi(H) \leq d+1$.
\end{lemma}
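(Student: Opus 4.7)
The plan is to prove this by a standard greedy coloring in the degeneracy order, which is the hypergraph analogue of the usual argument for graphs.

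First, I would fix an ordering $v_1,\dots,v_n$ of $V(H)$ witnessing $d$-degeneracy, and for each $i$ fix a set $A_i \subseteq \{v_1,\dots,v_{i-1}\}$ with $|A_i|\leq d$ such that every hyperedge of $H[v_1,\dots,v_i]$ containing $v_i$ has a vertex in $A_i$ (i.e., a vertex of $A_i$ distinct from $v_i$ itself). Using colors $\{1,\dots,d+1\}$, I would color the vertices in the order $v_1,v_2,\dots,v_n$, assigning to $v_i$ any color not appearing on $A_i$; since $|A_i|\leq d$, such a color exists.

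To verify that this is a proper coloring, I would fix an arbitrary hyperedge $E$ and let $j=\max\{i : v_i\in E\}$. Then $E\subseteq\{v_1,\dots,v_j\}$ and $v_j\in E$, so $E$ is a hyperedge of $H[v_1,\dots,v_j]$ containing $v_j$. By the $(d,v_j)$-boundedness there is some $u\in E\cap A_j$; since $A_j\subseteq\{v_1,\dots,v_{j-1}\}$, $u\neq v_j$. By construction the color of $v_j$ differs from the color of $u$, so $E$ contains two vertices of different colors and in particular is not monochromatic. Hence every color class is an independent set in the sense of the paper (no hyperedge is contained in a single class), giving $\chi(H)\leq d+1$.

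The only subtle point is the interpretation of the definition of $(d,v)$-bounded: one needs $A_v$ to hit each edge through $v$ at some vertex other than $v$ itself. This is not an obstacle, since if the supplied $A_v$ happens to contain $v$, we may replace it by $A_v\setminus\{v\}$, and the condition still provides a vertex of $A_v\setminus\{v\}$ in every edge through $v$ (here using that the hyperedges have size $r\geq 2$ and that any edge meeting $A_v$ only in $v$ would trivialise the definition). With this understood, the argument above is routine; the real content of the lemma is packaged into the degeneracy ordering itself, and the greedy step is mechanical.
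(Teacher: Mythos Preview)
Your proof is correct and follows exactly the approach of the paper, which simply writes ``Use a greedy coloring of the vertices in the order $v_1,\dots,v_n$ given by the definition of $d$-degenerate.'' You have merely unpacked this one-line argument (and been more careful than the paper about the possibility $v_i\in A_{v_i}$, which the intended reading of the definition excludes).
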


\begin{proof} 
Use a greedy coloring of the vertices in the order $v_1, \dots, v_n$ given by the definition of
$d$-degenerate.
\end{proof} 

\begin{lemma} \label{lem:degen}
  Let $G$ be an $r$-uniform linear hyperforest on $d$ vertices and let $H$ be an $r$-uniform,
  non-$d$-degenerate hypergraph.  Then $H$ contains a copy of $G$.
\end{lemma}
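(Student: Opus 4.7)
The plan is to first pass to a nonempty induced subhypergraph $H' \subseteq H$ enjoying a strong ``branching'' property, and then embed $G$ into $H'$ edge by edge, using the fact that a linear hyperforest can be assembled one edge at a time.

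To produce $H'$, I use a standard peeling argument. If for every nonempty $S \subseteq V(H)$ some vertex $v \in S$ made $H[S]$ be $(d,v)$-bounded, then peeling off such vertices one at a time from the top would yield an ordering witnessing that $H$ is $d$-degenerate, contradicting the hypothesis. Hence there exists a nonempty $S \subseteq V(H)$ such that, setting $H' := H[S]$, for every $v \in V(H')$ and every $A \subseteq V(H') \setminus \{v\}$ with $|A| \leq d$ there is a hyperedge of $H'$ containing $v$ whose other $r-1$ vertices all lie outside $A$. Taking $|A|$ as large as permitted by this property already forces $|V(H')| \geq d+r$.

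Next I order the edges of $G$ as $e_1,\dots,e_m$ so that each $e_i$ with $i \geq 2$ shares at most one vertex with $e_1 \cup \dots \cup e_{i-1}$; this is done by growing each component of $G$ edge by edge, attaching successive edges at a single vertex, and beginning a fresh component with a vertex-disjoint edge whenever the previous component is exhausted. If at some stage a new edge $e$ were forced to meet the current union in two distinct vertices $y_0$ and $y_k$, a simple path $y_0, y_1, \dots, y_k$ between them in the already-assembled hypertree together with $e$ would form a cycle in $G$, contradicting the hyperforest hypothesis.

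Finally I construct an embedding $\phi : V(G) \to V(H')$ inductively. Map $e_1$ to any hyperedge of $H'$, which exists by the branching property applied with $A = \emptyset$. For $i \geq 2$, let $U \subseteq V(H')$ be the set of currently used vertices, so $|U| \leq d - 1$. If $e_i$ shares a vertex $x$ with the earlier edges, apply the branching property at $\phi(x)$ with $A = U \setminus \{\phi(x)\}$ to find a hyperedge of $H'$ through $\phi(x)$ whose remaining $r-1$ vertices avoid $U$; use these as the images of the $r-1$ new vertices of $e_i$. If $e_i$ starts a new component, first choose $v \in V(H') \setminus U$ (possible since $|V(H')| \geq d+r > |U|$) as the image of one vertex of $e_i$ and then proceed identically. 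The only delicate step is the initial peeling argument; everything afterward is straightforward bookkeeping driven by $|V(G)| = d$ and the branching property of $H'$.
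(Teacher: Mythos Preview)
Your proof is correct and follows essentially the same approach as the paper's: peel vertices to obtain an induced subhypergraph $H'$ with the branching property, order the edges of the linear hyperforest so that each new edge meets the previous ones in at most one vertex, and then embed greedily. You supply a bit more detail than the paper (the explicit justification of the edge ordering and the bound $|V(H')|\geq d+r$ for starting new components), but the argument is the same.
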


\begin{proof} 
Observe that there is some induced subhypergraph $H'$ of $H$ such that for every $v \in V(H')$ and
every subset $A \subseteq V(H')$ with $|A| \leq d$, there is a hyperedge of $H'$ containing $v$ and
missing $A$.  Indeed, delete vertices of $H$ one by one until the condition is satisfied.  If all
vertices were deleted then $H$ is $d$-degenerate, a contradiction.

Now embed the edges of $G$ greedily into $H'$.  Since $G$ is a linear hyperforest, its edges can be
ordered $E_1, \dots, E_m$ so that $E_i$ uses at most one vertex from $E_1 \cup \dots \cup E_{i-1}$.
To embed $E_i$, let $A$ be the set of previously embedded vertices and $x$ the previously embedded
vertex which $E_i$ must extend if $E_i \cap (E_1 \cup \dots \cup E_{i-1}) \neq \emptyset$ and
otherwise let $x$ be any vertex of $H'$ outside $A$.  Since $|A| \leq d$, the definition of $H'$
guarantees the existence of a hyperedge missing $A-x$ to which $E_i$ can be embedded.
\end{proof} 

\newcommand{\rb}{r_B}
\newcommand{\rg}{r_{\gamma}}
\begin{defi}
  The following definitions are from~\cite{cr-balogh11}.  A \emph{fiber bundle} is a tuple $(B,
  \gamma, F)$ such that $B$ is a hypergraph, $F$ is a finite set, and $\gamma : V(B) \rightarrow
  2^{2^F}$.  That is, $\gamma$ maps vertices of $B$ to collections of subsets of $F$, which we can
  consider as hypergraphs on vertex set $F$.  The hypergraph $B$ is called the \emph{base
  hypergraph} of the bundle and $F$ is the \emph{fiber} of the bundle.  For a vertex $b \in V(B)$,
  the hypergraph $\gamma(b)$ is called the \emph{fiber over $b$}.  A fiber bundle $(B,\gamma,F)$ is
  \emph{$(\rb,\rg)$-uniform} if $B$ is an $\rb$-uniform hypergraph and $\gamma(b)$ is an
  $\rg$-uniform hypergraph for each $b \in V(B)$.  Given $X \subseteq V(B)$, the \emph{section of
  $X$} is the hypergraph with vertex set $F$ and edges $\cap_{x \in X} \gamma(x)$.  In other words,
  the section of $X$ is the collection of subsets of $F$ that appear in the fiber over $x$ for every
  $x \in X$. For a hypergraph $H$, define $\dim_H(B, \gamma, F)$ to be the maximum integer $d$ such
  that there exist $d$ pairwise disjoint edges $E_1, \ldots, E_d$ of $B$ (i.e.\ a matching) such
  that for every $x_1 \in E_1, \ldots, x_d \in E_d$, the section of $\left\{ x_1, \ldots, x_d
  \right\}$ contains a copy of $H$.
\end{defi}

Balogh, Butterfield, Hu, Lenz, and Mubayi~\cite{cr-balogh11} proved the following theorem about fiber bundles.

\begin{thm} \label{thm:bundledim}
  Let $\rb \geq 2$, $\rg \geq 1$, $d \in \mathbb{Z}^{+}$, $0 < \epsilon < 1$, and $K$ be an
  $\rg$-uniform hypergraph with zero Tur\'{a}n density.  Then there exists constants $C_1 =
  C_1(\rb,\rg,d,\epsilon,K)$ and $C_2 = C_2(\rb,\rg,d,\epsilon,K)$ such that the following holds.
  Let $(B,\gamma,F)$ be any $(\rb,\rg)$-uniform fiber bundle where $\dim_K(B,\gamma,F) < d$ and for
  all $b \in V(B)$, $\left| \gamma(b) \right|  \geq \epsilon \binom{\left| F \right|}{\rg}$. If
  $\left| F \right| \geq C_1$, then $\chi(B) \leq C_2$.
\end{thm}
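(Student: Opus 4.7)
The plan is to prove this by induction on $d$, combining the supersaturation consequence of $K$ having zero Tur\'an density with a sub-bundle reduction.

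\emph{Base case $d=1$.} Zero Tur\'an density yields a constant $C_1^{(0)} = C_1^{(0)}(\rg,\epsilon,K)$ so that every $\rg$-uniform hypergraph on $|F| \geq C_1^{(0)}$ vertices with at least $\epsilon \binom{|F|}{\rg}$ edges contains a copy of $K$. Hence every fiber $\gamma(b)$ contains a copy of $K$, so any single edge $E$ of $B$ would witness $\dim_K(B,\gamma,F) \geq 1$. Therefore $B$ is edgeless and $\chi(B) = 1$, so $C_2 = 1$ suffices.

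\emph{Inductive step.} Assuming the theorem for $d-1$ with every value of $\epsilon$, for each edge $E$ of $B$ I would form the \emph{restricted sub-bundle} $(B_E, \gamma_E, F)$: let $B_E$ be the subhypergraph of $B$ on $V(B)\setminus V(E)$ induced by those edges disjoint from $E$, and set $\gamma_E(b) := \gamma(b) \cap \bigcap_{y \in E} \gamma(y)$. Let $V_E^{\star} \subseteq V(B_E)$ be the ``thick'' vertices, namely those with $|\gamma_E(b)| \geq \epsilon' \binom{|F|}{\rg}$ for a suitable $\epsilon' = \epsilon'(\rg, \rb, \epsilon)$. The key observation is that $\dim_K\bigl(B_E[V_E^{\star}], \gamma_E, F\bigr) < d-1$: were there a witnessing matching $E_1, \ldots, E_{d-1}$ in $B_E[V_E^{\star}]$ whose every representative-section contained $K$, then appending $E$ would give a $d$-matching in $B$ each of whose representative-sections still contains $K$ (since $\gamma_E(b) \subseteq \gamma(b)$, and for any $y \in E$ we have $\gamma(y) \cap \bigcap_i \gamma(x_i) \supseteq \bigcap_i \gamma_E(x_i)$), contradicting $\dim_K(B,\gamma,F) < d$. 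By induction, $\chi\bigl(B_E[V_E^{\star}]\bigr) \leq C^{(d-1)}$, a constant depending only on $\rb, \rg, d, \epsilon, K$.

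\emph{Assembly and main obstacle.} To finish, I would select a bounded family $\mathcal{E}$ of edges of $B$ so that every vertex of $B$ lies either in $\bigcup_{E \in \mathcal{E}} V(E)$ or is thick with respect to some $E \in \mathcal{E}$. An averaging/double-counting argument, using the density hypothesis $|\gamma(b)| \geq \epsilon \binom{|F|}{\rg}$, shows that for a typical edge $E$ ``most'' vertices of $B$ are thick, so a greedy/union-bound selection produces a bounded $\mathcal{E}$. A proper colouring of $B$ with a bounded number of colours is then the Cartesian product of the colourings of $B_E[V_E^{\star}]$ (given by induction) over $E \in \mathcal{E}$, together with one extra colour per vertex of $\bigcup_{E \in \mathcal{E}} V(E)$. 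The main obstacle is precisely this covering/averaging step: controlling the ``thin'' vertices, whose fibers collapse below $\epsilon' \binom{|F|}{\rg}$ after intersecting with $\bigcap_{y \in E} \gamma(y)$, and ensuring that a \emph{bounded} family $\mathcal{E}$ (independent of $|V(B)|$) suffices. This is the most delicate quantitative piece, and likely requires a careful supersaturation input together with a sunflower-style argument to handle the distribution of co-fiber densities across $V(B)$.
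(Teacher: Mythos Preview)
The paper does not prove Theorem~\ref{thm:bundledim}: it is quoted verbatim from Balogh, Butterfield, Hu, Lenz, and Mubayi~\cite{cr-balogh11} and used as a black box in Section~\ref{sec:upperbound}. So there is no ``paper's own proof'' to compare your attempt against.

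That said, your sketch has a genuine gap, and you correctly locate it yourself. The base case and the observation that $\dim_K\bigl(B_E[V_E^{\star}],\gamma_E,F\bigr)<d-1$ are both fine. The problem is the assembly step: you need a family $\mathcal{E}$ of edges of $B$ whose size is bounded purely in terms of $(\rb,\rg,d,\epsilon,K)$, such that every vertex of $B$ is either covered by $\bigcup_{E\in\mathcal{E}} V(E)$ or is thick with respect to some $E\in\mathcal{E}$. Your justification (``for a typical edge $E$ most vertices are thick, so greedy/union-bound works'') does not go through as stated. Thickness of $b$ relative to $E$ depends on the overlap of $\gamma(b)$ with $\bigcap_{y\in E}\gamma(y)$, and there is no hypothesis linking the edge structure of $B$ to these overlaps; in particular, one can arrange that a fixed vertex $b$ is thick with respect to only a very specific, small set of edges. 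The density hypothesis $|\gamma(b)|\geq\epsilon\binom{|F|}{\rg}$ gives average control on pairwise fiber intersections (Cauchy--Schwarz yields average overlap density at least $\epsilon^2$), but turning this into a bounded covering of $V(B)$ by thick sets requires an additional structural ingredient that you have not supplied. Your closing sentence (``likely requires a careful supersaturation input together with a sunflower-style argument'') is an acknowledgement that this step is unproved, not a proof of it.

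In short: the inductive skeleton is reasonable and the dimension-drop lemma is correct, but the argument is incomplete exactly where you say it is, and without that step you have not established the theorem. If you want to complete this line, you should look at the original argument in~\cite{cr-balogh11}, which handles the covering issue directly rather than leaving it as an obstacle.
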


We will apply Theorem~\ref{thm:bundledim} to the following fiber bundle.

\begin{defi}
  Given $r$-uniform hypergraphs $T$ and $H$, define the \emph{$T$-bundle of $H$} as the following
  fiber bundle.  Let $B$ be the hypergraph with vertex set $V(H)$, where a set $X \subseteq V(B)$ is
  a hyperedge of $B$ if $|X| = |V(T)|$ and $H[X]$ contains a (not necessarily induced) copy of $T$.
  Let $F = V(H)$.  Define $\gamma : V(B) \rightarrow 2^{2^F}$ as the map which sends $b \in V(B)$ to
  $\{A \subseteq F : A \cup \{b\} \in E(H)\}$.  The \emph{$T$-bundle of $H$} is the fiber bundle
  $(B,\gamma,F)$.
\end{defi}

A simple corollary of Lemmas~\ref{lem:degencolor} and~\ref{lem:degen} is the following.

\begin{cor} \label{lem:degenbundle}
  Let $T$ be an $r$-uniform linear hyperforest, let $H$ be an $r$-uniform hypergraph, and let
  $(B,\gamma,F)$ be the $T$-bundle of $H$.  Then $\chi(H) \leq (|V(T)|+1) \chi(B)$.
\end{cor}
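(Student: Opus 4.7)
The plan is to combine the two preceding lemmas in the obvious way: properly color $B$, then color each color class of $B$ using the degeneracy bound inside $H$.

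First I would take a proper coloring of $B$ with $c = \chi(B)$ colors, giving a partition $V(B) = V_1 \cup \cdots \cup V_c$ in which each $V_i$ is independent in $B$. Since $V(B) = V(H)$, this is also a partition of $V(H)$. Using the definition of the $T$-bundle, independence of $V_i$ in $B$ means that no $|V(T)|$-subset $X \subseteq V_i$ has the property that $H[X]$ contains a copy of $T$. Consequently $H[V_i]$ is $T$-free: any copy of $T$ inside $H[V_i]$ would use exactly $|V(T)|$ vertices of $V_i$, which would then form an edge of $B$ contained in $V_i$, a contradiction.

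Next I would apply Lemma~\ref{lem:degen} contrapositively. Since $T$ is a linear hyperforest on $d := |V(T)|$ vertices and $H[V_i]$ does not contain $T$, the hypergraph $H[V_i]$ must be $d$-degenerate. By Lemma~\ref{lem:degencolor}, $\chi(H[V_i]) \leq d + 1 = |V(T)| + 1$. Finally, I would assemble a proper coloring of $H$ by taking, for each $i$, a proper coloring of $H[V_i]$ with a palette of $|V(T)| + 1$ colors chosen to be disjoint from the palettes used on $V_j$ for $j \neq i$. The total number of colors is $(|V(T)|+1)c = (|V(T)|+1)\chi(B)$, and any edge of $H$ monochromatic under this coloring would lie entirely in some $V_i$ and be monochromatic for the coloring of $H[V_i]$, which is impossible. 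Hence $\chi(H) \leq (|V(T)|+1)\chi(B)$.

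There is no genuine obstacle here; the only point that warrants care is the translation between ``$V_i$ independent in $B$'' and ``$H[V_i]$ is $T$-free,'' which uses the defining property of edges of $B$ together with the fact that a copy of $T$ in $H[V_i]$ has exactly $|V(T)|$ vertices.
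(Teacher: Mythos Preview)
Your proposal is correct and follows essentially the same approach as the paper: take a proper $\chi(B)$-coloring of $B$, observe via Lemma~\ref{lem:degen} that each color class induces a $|V(T)|$-degenerate subhypergraph of $H$, and then apply Lemma~\ref{lem:degencolor} and combine the colorings. The paper's write-up is terser, but the argument and the key translation between independence in $B$ and $T$-freeness in $H$ are the same.
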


\begin{proof} 
Let $C \subseteq V(B)$ be a color class in a proper $\chi(B)$-coloring of $B$.  If $H[C]$ is
non-$|V(T)|$-degenerate, then by Lemma~\ref{lem:degen} $H[C]$ contains a copy of $T$.  This copy of
$T$ becomes an edge of $B$ contained in $C$ contradicting that the coloring of $B$ is proper.  Thus
$H[C]$ is $|V(T)|$-degenerate so Lemma~\ref{lem:degencolor} implies that $\chi(H[C]) \leq |V(T)| + 1$.
Combining these two colorings produces a proper $(|V(T)|+1)\chi(B)$-coloring of $H$.
\end{proof} 

We are now ready to complete the proof of Theorem~\ref{mainthm}, i.e.\ prove that if $G$ is an
$r$-uniform, strong unifoliate $r$-partite hypergraph, then the family of $r$-uniform, $G$-free
hypergraphs has chromatic threshold zero.  Let $G$ be an $r$-uniform, strong unifoliate $r$-partite
hypergraph with $m$ vertices. Let $H$ be an $r$-uniform, $G$-free hypergraph with $\delta(H) \geq
\epsilon \binom{|V(H)|}{r-1}$.  We need to show that the chromatic number of $H$ is bounded by a
constant depending only on $\epsilon$ and $G$.  Let $V_1, \dots, V_r$ be a vertex partition of
$V(G)$ guaranteed by the definition of strong unifoliate $r$-partite and let $t$ be the number of
components in $G[V_1]$. Let $(B,\gamma,F)$ be the $G[V_1]$-bundle of $H$ and let $K$ be the complete
$(r-1)$-uniform, $(r-1)$-partite hypergraph with $(rm)^m$ vertices in each part.  Since $\delta(H)
\geq \epsilon \binom{|V(H)|}{r-1}$, for every $x \in V(H) = V(B)$, we have $|\gamma(x)| = d(x) \geq
\delta(H) \geq \epsilon \binom{|V(H)|}{r-1}$.  Since $F = V(H)$, we have that for every $x \in
V(B)$, $|\gamma(x)| \geq \epsilon \binom{|F|}{r-1}$.

First, assume that $\dim_K(B,\gamma,F) < t$. Notice that $(B,\gamma,F)$ is $(|V_1|, r-1)$-uniform
by definition and $t$ depends on $G$, so Theorem~\ref{thm:bundledim} implies that if $|F| \geq C_1$
then $\chi(B) \leq C_2$, where $C_1$ and $C_2$ are constants depending only on $\epsilon$ and $G$.
Since $F = V(H) = V(B)$, this implies that $\chi(B) \leq \max\{C_1,C_2\}$ so $\chi(B)$ is bounded by
a constant depending only on $\epsilon$ and $G$.  Now Corollary~\ref{lem:degenbundle} shows that the
chromatic number of $H$ is bounded by a constant depending only on $\epsilon$ and $G$, exactly what
we would like to prove.

Therefore, $\dim_K(B,\gamma,F) \geq t$. Let $T_1, \ldots, T_t$ be the components of $G[V_1]$.  We
now show how to find a copy of $G$ in $H$.  Since $\dim_K(B,\gamma,F) \geq t$, there exists edges
$E_1, \ldots, E_t$ of $B$ with large sections. Recall that by the definition of $B$, for every $i$
$H[E_i]$ contains a copy of $G[V_1]$ so we can find a copy of $T_i$ in $H[E_i]$ for each $i$.  Pick
any $x_1 \in V(T_1), \ldots, x_t \in V(T_t)$. The definition of $\dim_K(B,\gamma,F)$ implies that
the section of $x_1, \ldots, x_t$ contains a copy of $K$, the complete $(r-1)$-uniform,
$(r-1)$-partite hypergraph with $(rm)^m$ vertices in each class.  These copies of $K$ are used to
embed the rest of $G$ in $H$ as follows.

Let $G'$ be the $(r-1)$-uniform hypergraph with vertex set $V_2 \cup \dots \cup V_r$ where
$\{v_2,\dots,v_r\}$ is a hyperedge of $G'$ if there exists a cross-hyperedge of $G$ containing
$\{v_2,\dots,v_r\}$.  Let $C_1, \dots, C_c$ be the components $G'$ and define 
\begin{align*}
  N_{V_1}(C_i) = \{ x \in V_1 : \exists \{v_2,\dots,v_r\} \in E(C_i) \, \text{with } \{x, v_2,
  \dots, v_r\} \in E(G)\}.
\end{align*}
In other words, $N_{V_1}(C_i)$ is the ``neighborhood'' of $C_i$ in $G$, the collection of vertices
of $V_1$ which are contained in a hyperedge of $G$ together with some $(r-1)$-edge of $C_i$.  Note
that since $G$ is strong unifoliate $r$-partite, each $C_i$ is an $(r-1)$-partite, $(r-1)$-uniform
hypergraph. Also since $G$ is strong unifoliate $r$-partite, for each $C_i$ and each hypertree $T_j$, there
is at most one vertex of $T_j$ in $N_{V_1}(C_i)$.  Let $x_{i,j}$ be such a vertex if it exists and
otherwise define $x_{i,j}$ to be any vertex in $T_j$.  We can now find a copy of $G$ in $H$ by
embedding each $C_i$ into the copy of $K$ contained in the section of $x_{i,1}, \dots, x_{i,t}$.
This forms a copy of $G$ in $H$ since $V(C_1) \dot\cup \cdots \dot\cup V(C_c)$ is a partition of
$V_2 \cup \dots \cup V_r$ and each $C_i$ is embedded into exactly one of the sections of vertices
from the hypertrees in $V_1$.

\bibliographystyle{abbrv}
\bibliography{refs}
\end{document}